\DeclareMathOperator{\dive}{div}
\def\ds{\displaystyle}
\def\eps{{\varepsilon}}
\def\N{\mathbb{N}}
\def\O{\Omega}
\def\R{\mathbb{R}}
\def\A{\mathcal{A}}
\def\HH{\mathcal{H}}
\def\F{\mathcal{F}}
\def\PP{\mathcal{P}}
\def\pa{\partial}
\newcommand{\be}{\begin{equation}}
\newcommand{\ee}{\end{equation}}
\newcommand{\bib}[4]{\bibitem{#1}{\sc#2: }{\it#3. }{#4.}}
\numberwithin{equation}{section}
\theoremstyle{plain}
\newtheorem{theo}{Theorem}[section]
\newtheorem{lemm}[theo]{Lemma}
\newtheorem{coro}[theo]{Corollary}
\newtheorem{prop}[theo]{Proposition}
\theoremstyle{definition}
\newtheorem{rem}[theo]{Remark}
\title[A shape optimization problem on planar sets with prescribed topology]{A shape optimization problem on planar sets with prescribed topology}
\author[L. Briani]{Luca Briani}
\author[G. Buttazzo]{Giuseppe Buttazzo}
\author[F. Prinari]{Francesca Prinari}
\date{}
\begin{document}

\maketitle

{\it Dedicated to Franco Giannessi for his 85th birthday}

\begin{abstract}
We consider shape optimization problems involving functionals depending on perimeter, torsional rigidity and Lebesgue measure. The scaling free cost functionals are of the form $P(\O)T^q(\O)|\O|^{-2q-1/2}$ and the class of admissible domains consists of two-dimensional open sets $\O$ satisfying the topological constraints of having a prescribed number $k$ of bounded connected components of the complementary set. A relaxed procedure is needed to have a well-posed problem and we show that when $q<1/2$ an optimal relaxed domain exists. When $q>1/2$ the problem is ill-posed and for $q=1/2$ the explicit value of the infimum is provided in the  cases $k=0$ and $k=1$.
\end{abstract}

\textbf{Keywords:} torsional rigidity, shape optimization, perimeter, planar sets, topological genus.

\textbf{2010 Mathematics Subject Classification:} 49Q10, 49J45, 49R05, 35P15, 35J25.

%%%%%%%%%%%%%%%%%%%%%%%%%%%%%%%%%%%%%%%%%%%%%%%%%%
\section{Introduction\label{sintro}}

In the present paper we aim to study some particular shape optimization problems in classes of planar domains having a prescribed topology. The quantities we are going to consider for a general bounded open set $\O $ are the distributional perimeter $P(\O)$ and the torsional rigidity $T(\O)$. More precisely, we deal with a scaling free functional $F_q$ which is expressed as the product of the perimeter, and of a suitable powers of the torsional rigidity and of the Lebesgue measure of $\O$, depending on a positive parameter $q$.
The restriction to the planar case is essential and is not made here for the sake of simplicity; indeed, in higher dimension stronger topological constraints have to be imposed to make the problems well posed.

In a previous paper \cite{BBP20} we treated the problem above in every space dimension and, after discussing it for general open sets, we focused to the class of convex open sets.

In the following we consider the optimization problems for $F_q$ in the classes $\A_k$ of planar  domains having at most $k$ ``holes".

While the maximization problems are always ill posed, even in the class of smooth open sets in $\A_k$, it turns out that the minimizing problems are interesting if $q\le 1/2$ and some regularity constraints are imposed to the sets $\O\in\A_k$.

In this case, we provide a explicit lower bound for $F_q$ in the class of Lipschitz sets in $\A_k$, which turns out to be sharp when $k=0,1$ and $q=1/2$ and coincides with the infimum of $F_q$ in the class of convex sets, as pointed out by Polya in \cite{polya60}.

When $q<1/2$ we study the existence of minimizers for $F_q$ and our approach is the one of direct methods of the calculus of variations which consists in the following steps:
\begin{itemize}
\item[-]defining the functional $F_q$ only for Lipschitz domains of the class $\A_k$;
\item[-]relaxing the functional $F_q$ on the whole class $\A_k$, with respect to a suitable topology;
\item[-]showing that the relaxed functional admits an optimal domain in $\A_k$;
\item[-]proving that such a domain is Lipschitz.
\end{itemize}
The relaxation procedure above is necessary to avoid trivial counterexamples due to the fact that the perimeter is Lebesgue measure sensitive, while the torsional rigidity is capacity sensitive.

As in most of the free boundary problems, the last regularity step presents strong difficulties and, even if the regularity of optimal domains could be expected, we are unable to give a full proof of this fact. It would be very interesting to establish if an optimal domain fulfills some kind of regularity, or at least if its perimeter  coincides with the Hausdorff measure of the boundary, which amounts to exclude the presence of internal fractures.

This paper is organized as follows.  
In Section \ref{spre}, after recalling the definitions of perimeter and torsional rigidity,  we summarize the main results of this paper. In Section \ref{sapp} we describe the key tools necessary to apply the so-called method of \textit{interior parallels}, introduced by Makai in \cite{Ma},\cite{Ma59} and by Polya in \cite{polya60}, to our setting. Section \ref{shau} contains a review of some basic facts concerning the complementary Hausdorff convergence, with respect to which we perform the relaxation procedure. Although Sections \ref{sapp} and \ref{shau} may be seen as preliminary, we believe they contain some interesting results that, as far as we know, are new in  literature. Finally, in Section \ref{sexis} we discuss the optimization problem: we extend a well known inequality due to Polya (Theorem \ref{theo.Polya} and Remark \ref{rem.polya}), and we prove the main results (Corollary \ref{coro.polya} and Theorem \ref{theo.exis}).

%%%%%%%%%%%%%%%%%%%%%%%%%%%%%%%%%%%%%%%%%%%%%%%%%%%%%%%%%%%%%%%
\section{Preliminaries}\label{spre}

The shape functionals we consider in this paper are of the form
\begin{equation}\label{Fq}F_q(\O)=\frac{P(\O)T^q(\O)}{|\O|^{2q+1/2}}
\end{equation}
where $q>0$, $\O\subset\R^2$  is a general bounded open set and, $|\O|$ denotes its Lebesgue measure. 

For the reader's convenience,  in the following  we report  the definitions and the basic properties of the perimeter and of the torsional rigidity. According to the De Giorgi formula, the perimeter is given by
$$P(\O)=\sup\left\{\int_\O\dive\phi\,dx\ :\ \phi\in C^1_c(\R^2;\R^2),\ \|\phi\|_{L^\infty(\R^2)}\le1\right\},$$
and satisfies:
\begin{itemize}
\item[-]the {\it scaling property}
$$P(t\O)=tP(\O)\qquad\text{for every }t>0;$$
\item[-] the lower semicontinuity with respect to the $L^1$-convergence, that is the convergence of characteristic functions.
\item[-]the {\it isoperimetric inequality}
\be\label{isoper}
\frac{P(\O)}{|\O|^{1/2}}\ge\frac{P(B)}{|B|^{1/2}}
\ee
where $B$ is any disc in $\R^2$. In addition the inequality above becomes an equality if and only if $\O$ is a disc (up to sets of Lebesgue measure zero).
\end{itemize}

The torsional rigidity $T(\O)$ is defined as
$$T(\O)=\int_\O u\,dx$$
where $u$ is the unique solution of the PDE
\be\label{pdetorsion}\begin{cases}
-\Delta u=1&\text{in }\O,\\
u\in H^1_0(\O).
\end{cases}
\ee
By means of an integration by parts we can equivalently express the torsional rigidity as
\be \label{vartor} 
T(\O)=\max\Big\{\Big[\int_\O u\,dx\Big]^2\Big[\int_\O|\nabla u|^2\,dx\Big]^{-1}\ :\ u\in H^1_0(\O)\setminus\{0\}\Big\}.
\ee
The main properties we use for the torsional rigidity are:
\begin{itemize}
\item[-]the monotonicity with respect to the set inclusion
$$\O_1\subset\O_2\Longrightarrow T(\O_1)\le T(\O_2);$$
\item[-]the additivity on disjoint families of open sets
$$T\Big(\bigcup_n\O_n\Big)=\sum_n T(\O_n)\qquad\text{whenever $\O_n$ are pairwise disjoint;}$$
\item[-]the scaling property
$$T(t\O)=t^4T(\O),\qquad\text{for every }t>0;$$
\item[-]the relation between torsional rigidity and Lebesgue measure (known as {\it Saint-Venant inequality})
\be\label{stven}
\frac{T(\O)}{|\O|^2}\le\frac{T(B)}{|B|^2}.
\ee
In addition, the inequality above becomes an equality if and only if $\O$ is a disc (up to sets of capacity zero).
\end{itemize}

If we denote by $B_1$ the unitary disc of $\R^2$, then the solution of \eqref{pdetorsion}, with $\O=B_1$, is
$$u(x)=\frac{1-|x|^2}{4}$$
which provides
$$T(B_1)=\frac{\pi}{8}.$$

Thanks to the scaling properties of the perimeter and of the torsional rigidity, the functional $F_q$ defined  by \eqref{Fq} is {\it scaling free} and optimizing it in a suitable class $\A$ is equivalent to optimizing the product $P(\O)T^q(\O)$ over $\A$ with the additional measure constraint $|\O|=m$, for a fixed $m>0$.

In a previous paper \cite{BBP20} we considered the minimum and the maximum problem for $F_q$ (in every space dimension) in the classes
\[\begin{split}
&\A_{all}:=\big\{\O\subset\R^d\ :\ \O\ne\emptyset\big\}\\
&\A_{convex}:=\big\{\O\subset\R^d\ :\ \O\ne\emptyset,\ \O\text{ convex}\big\}.
\end{split}\]
We summarize here below the results available in the case of dimension 2:
\begin{itemize}
\item[-]for every $q>0$
$$\inf\big\{F_q(\O)\ :\ \O\in\A_{all},\ \O\text{ smooth}\big\}=0;$$
\item[-]for every $q>0$
$$\sup\big\{F_q(\O)\ :\ \O\in\A_{all},\ \O\text{ smooth}\big\}=+\infty;$$
\item[-]for every $q>1/2$
$$\begin{cases}
\inf\big\{F_q(\O)\ :\ \O\in\A_{convex}\big\}=0\\
\max\big\{F_q(\O)\ :\ \O\in\A_{convex}\big\}\quad\text{is attained};
\end{cases}$$
\item[-]for every $q<1/2$
$$\begin{cases}
\sup\big\{F_q(\O)\ :\ \O\in\A_{convex}\big\}=+\infty\\
\min\big\{F_q(\O)\ :\ \O\in\A_{convex}\big\}\quad\text{is attained};\\
\end{cases}$$
\item[-]for $q=1/2$
$$\begin{cases}
\inf\big\{F_{1/2}(\O)\ :\ \O\in\A_{convex}\big\}=(1/3)^{1/2}\\
\sup\big\{F_{1/2}(\O)\ :\ \O\in\A_{convex}\big\}=(2/3)^{1/2},
\end{cases}$$
asymptotically attained, respectively, when $\O$ is a long thin rectangle and when $\O$ is a long thin triangle.
\end{itemize}

Here we discuss the optimization problems for $F_q$ on the classes of planar domains
$$\A_k:=\big\{\O\subset\R^2\ :\ \O\ne\emptyset,\ \O\text{ bounded, }\#\O^c\le k\big\},$$
where, for every set $E$, we denote by $\#E$ the number of bounded connected components of $E$ and $\O^c=\R^2\setminus\O$. In particular $\A_0$ denotes the class of simply connected domains (not necessarily connected).

From what seen above the only interesting cases to consider are:
$$\begin{cases}
\text{the maximum problem for $F_q$ on $\A_k$ when $q\ge1/2$ ;}\\
\text{the minimum problem for $F_q$ on $\A_k$ when $q\le1/2$.}
\end{cases}$$
We notice that the maximum problem is not well posed, since for every $q>0$ and every $k\ge0$
$$\sup\big\{F_q(\O)\ :\ \O\text{ smooth},\ \O\in\A_k\big\}=+\infty.$$
Indeed, it is enough to take as $\O_n$ a smooth perturbation of the unit disc $B_1$ such that
$$B_{1/2}\subset\O_n\subset B_2\qquad\text{and}\qquad P(\O_n)\to+\infty.$$
All the domains $\O_n$ are simply connected, so belong to $\A_k$ for every $k\ge0$, and
$$|\O_n|\le|B_2|,\qquad T(\O_n)\ge T(B_{1/2}),$$
where we used the monotonicity of the torsional rigidity. Therefore
$$F_q(\O_n)\ge\frac{P(\O_n)T^q(B_{1/2})}{|B_2|^{2q+1/2}}\to+\infty.$$
Moreover  $$\inf\big\{F_q(\O)\ :\ \O\in\A_k\big\}=0,$$ as we can easily see by taking as $\O_n$ the unit disk of $\R^2$ where we remove the $n$ segments (in polar coordinates $r,\theta$)
$$S_i=\big\{\theta=2\pi i/n,\ r\in[1/n,1]\big\}\qquad i=1,\dots,n.$$
We have that all the $\O_n$ are simply connected, and
$$|\O_n|=\pi,\qquad P(\O_n)=2\pi,\qquad T(\O_n)\to0,$$
providing then $F_q(\O_n)\to0$. 

Therefore, the problems we study in the sequel are 
$$\inf\big\{F_q(\O)\ :\ \O\in \A_k,\text{ $\O$ Lipschitz}\},$$
when $q\le1/2$ and $k\in\N$. Denoting by $m_{q,k}$ the infimum above we summarize here below our main results.

\begin{itemize}
\item[-] For every $q\le1/2$ the values $m_{q,k}$ are decreasing with respect to $k$ and
$$\lim_{k\to\infty}m_{q,k}=0.$$
\item[-]When $k=0,1$ it holds
$$m_{1/2,0}=m_{1/2,1}=3^{-1/2}=\inf\big\{F_{1/2}(\O)\ :\ \O\text{ convex}\big\};$$
in particular, for $q=1/2$ there is no gap for $\inf F_{1/2}$ between the classes $\A_{convex}$, $\A_0$, $\A_1$, and the infimum is asymptotically reached by a sequence of long and thin rectangles.
\item[-]For every $q\le1/2$ and $k\in\N$, we have 
$$m_{q,k}\ge\begin{cases}(8\pi)^{1/2-q}3^{-1/2}&\text{if }k=0,1, \\
(8\pi)^{1/2-q}(3^{1/2}k)^{-1}&\text{if }k>1.
\end{cases}$$
\item[-]For $q<1/2$, we define a relaxed functional $\F_{q,k}$, which coincides with $F_q$ in the class of the sets $\O\in\A_k$ satisfying $P(\O)=\HH^1(\pa\O)$, being $\HH^1$ the $1$-dimensional Hausdorff measure.
We also prove that $\F_{q,k}$ admits an optimal domain $\O^{\star}\in\A_k$ with $\HH^{1}(\pa\O^\star)<\infty$.
\end{itemize}

%%%%%%%%%%%%%%%%%%%%%%%%%%%%%%%%%%%%%%%%%%%%%%%%%%
\section{ Approximation by interior parallel sets} \label{sapp}
%\red{We devote this subsection to show 
%$$\A_{k,Lip}=\{\O\in \A_k\ :\ \O\text{ Lipschitz }\},$$
%in 
%$$\A^*_{k}=\{\O\in \A_k\ :\ P(\O)=\HH^1(\partial \O) \}$$
%with respect to the co-Hausdorff convergence, defined in Subsection \ref{sshau}.}

For a given bounded nonempty open set $\O$ we denote by $\rho(\O)$ its \textit{inradius}, defined as
$$\rho(\O):=\sup\big\{d(x,\pa\O)\ :\ x\in\O\big\},$$
where, as usual,
$d(x,E):=\inf\big\{d(x,y)\ :\ y\in E\big\}$.
For every $t\ge 0$, we denote by $\O(t)$ the \textit{interior parallel set} at distance $t$ from $\pa\O$, i.e.
$$\O(t):=\big\{x\in\O\ :\ d(x,\pa\O)>t\big\},$$
and by $A(t):=|\O(t)|$. Moreover we denote by $L(t)$ the length of the \textit{interior parallel}, that is the set of the points in $\O$ whose distance from $\pa\O$ is equal to $t$.
More precisely we set 
$$L(t):=\HH^1 (\{x\in\O\ :\ d(x,\pa\O)=t \}).$$
Notice that  $\partial \O(t)\subseteq \{x\in\O\ :\ d(x,\pa\O)=t \}$. 
Using coarea formula (see \cite{EvGa} Theorem 3.13) we can write the following identity:
\be\label{eq.Evans}
A(t)=\int_t^{\rho(\O)}L(s)\,ds\qquad\forall t\in(0,\rho(\O)).
\ee As a consequence it is easy to verify that for a.e. $t\in(0,\rho(\O))$ there exists the derivative $A'(t)$ and it coincides with $-L(t)$. The interior parallel sets $\O(t)$ belong to $\A_k$ as soon as $\O\in\A_k$, as next elementary argument shows.

\begin{lemm}\label{lemm.innerA_k} Let $\Omega\in\A_k$. Then $\O(t)\in \A_k$ for every $t\in [0,\rho(\O))$.
\end{lemm}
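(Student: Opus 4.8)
The plan is to describe the complement of $\O(t)$ explicitly and then observe that passing to an interior parallel set can only merge connected components of the complement, never create new bounded ones. So the whole statement reduces to one topological inequality, $\#\O(t)^c\le k$.

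First I would dispose of the easy points. For $t=0$ we have $\O(0)=\O$ because $\O$ is open, so there is nothing to prove. For $t\in(0,\rho(\O))$ the definition of the inradius produces a point $x\in\O$ with $d(x,\pa\O)>t$, hence $\O(t)\ne\emptyset$; moreover $\O(t)\subseteq\O$ is bounded, and since $x\mapsto d(x,\O^c)$ is $1$-Lipschitz and agrees with $d(x,\pa\O)$ for $x\in\O$, the set $\O(t)=\{x:\ d(x,\O^c)>t\}$ is open. Thus only the constraint $\#\O(t)^c\le k$ carries real content.

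The key identity is that the complement of $\O(t)$ is exactly the closed $t$-neighbourhood of $\O^c$,
$$\O(t)^c=N_t(\O^c):=\big\{x\in\R^2:\ d(x,\O^c)\le t\big\},$$
which one checks directly using $d(x,\pa\O)=d(x,\O^c)$ on $\O$. I would then decompose $\O^c=\bigcup_i C_i$ into its connected components: since $\O$ is bounded exactly one of them, say $C_0$, is unbounded, and by hypothesis at most $k$ are bounded. Because $N_t(\O^c)=\bigcup_i N_t(C_i)$ and the $t$-neighbourhood of a connected set is connected, each $N_t(C_i)$ is contained in a single connected component of $N_t(\O^c)$; consequently every component of $N_t(\O^c)$ is a union of some of the sets $N_t(C_i)$, and distinct components use disjoint index sets. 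A component is bounded precisely when all the $C_i$ contributing to it are bounded (the neighbourhood of a bounded set is bounded, that of an unbounded set is unbounded), and in particular the component absorbing $N_t(C_0)$ is unbounded. Hence the number of bounded components of $\O(t)^c=N_t(\O^c)$ is at most the number of bounded components of $\O^c$, namely at most $k$, and therefore $\O(t)\in\A_k$.

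The argument is elementary and I do not anticipate a genuine obstacle; the two points deserving care are the verification of the identity $\O(t)^c=N_t(\O^c)$ and the standard topological fact that $N_t(C)$ is connected (and bounded) whenever $C$ is. Together these guarantee that the neighbourhood operation only coalesces preexisting components rather than splitting them or spawning new bounded ones, which is precisely what yields the monotonicity $\#\O(t)^c\le\#\O^c\le k$.
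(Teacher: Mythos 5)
Your proof is correct and takes essentially the same route as the paper's: both identify $\O(t)^c$ with the union of the closed $t$-neighbourhoods of the connected components of $\O^c$ (one unbounded, at most $k$ bounded) and use that the $t$-neighbourhood of a connected set is connected, so that passing to the parallel set can only merge components of the complement. Your version merely makes explicit a few points the paper leaves implicit (the identity $\O(t)^c=\{x:\ d(x,\O^c)\le t\}$ and the bookkeeping that the component absorbing the unbounded piece stays unbounded), so no further comparison is needed.
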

\begin{proof} Let $\alpha:=\#\O^c$ ($\le k$), and $C^1,C^2,\cdots C^\alpha$ be the (closed) bounded connected components of $\O^c$ and $C^0$ the unbounded one. Define
$$C^i(t):=\big\{x\in\R^2\ : \ d(x,C^i)\le t\big\}.$$
Since $C^i$ is connected, then $C^i(t)$ is connected and the set $\bigcup_{i=0}^\alpha C^i(t)$ has at most $\alpha+1$ connected components. Since we have
$\O^c(t)=\bigcup_{i=0}^\alpha C^i(t)$,
%Moreover, it holds
%$$
%\O\cap \O^c(t)= \bigcup_{i=0}^\alpha C^i(t)\cap \O
%$$
%which leads to
%$$ (\O(t))^c=\bigcup_{i=0}^\alpha \left(C^i(t)\cap\O\right) \cup(\bigcup_{i=0}^\alpha C^i)=\bigcup_{i=0}^\alpha H_i^+(t),$$
the lemma is proved.
\end{proof}

In the planar case, even without any regularity assumptions on $\pa\O$, the sets $\O(t)$ are a slightly smoothed version of $\O$. In particular the following result (see \cite{Fu85}), that we limit to report in the two dimensional case, proves that $\O(t)$ has a Lipschitz boundary for a.e. $t\in(0,\rho(\O))$.

\begin{theo}[Fu]\label{theo.Fu}
Let $K\subseteq \R^2$ be a compact set. There exists a compact set $C=C(K)\subseteq [0, 3^{-1/ 2} diam(K)]$ such that $|C|=0$ and if $t\notin C$ then the boundary of $\{x\in\R^2\ :\ d(x,K)>t\}$ is a Lipschitz manifold. 
\end{theo}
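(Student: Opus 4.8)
The plan is to study the distance function $f(x):=d(x,K)$ directly and to read off the structure of the superlevel sets $\{f>t\}$ from the elementary geometry of nearest points. On $\R^2\setminus K$ the function $f$ is $1$-Lipschitz and, at each point $x$ with nearest point $y_0$, it admits the smooth upper support $x'\mapsto|x'-y_0|$ (touching at $x$, with Hessian bounded by $1/\dist(x,K)$); hence $f$ is locally semiconcave on $\R^2\setminus K$, so it is differentiable $\HH^2$-a.e., with $|\nabla f|=1$ and $\nabla f(x)=(x-y)/t$ at the (then unique) nearest point $y$. I would then introduce the Grove--Shiohama notion of critical point: $x$ is \emph{critical} at level $t=f(x)$ if $x$ lies in the convex hull of its set $\Sigma(x)$ of nearest points in $K$, equivalently if the zero vector belongs to the generalized gradient $\mathrm{conv}\{(x-y)/t:\ y\in\Sigma(x)\}$. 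Call $t>0$ a \emph{critical value} if $\{f=t\}$ contains a critical point, and let $C$ be the closure of the set of critical values.

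First I would establish the confinement $C\subseteq[0,3^{-1/2}\diam(K)]$. If $x$ is critical at level $t$, then by Carath\'eodory's theorem in $\R^2$ it is a convex combination of at most three nearest points $z_1,z_2,z_3\in\Sigma(x)$, each lying on the circle $\pa B(x,t)$; thus $x$ lies in a triangle inscribed in that circle and containing its center, which forces all three central angles to be $\le\pi$. Minimizing the largest central angle (and hence the longest side $2t\sin(\cdot)$) subject to the angles summing to $2\pi$ gives the equilateral configuration, whose longest side equals $t\sqrt3$. Therefore $\diam(K)\ge\max_{i\ne j}|z_i-z_j|\ge t\sqrt3$, i.e. $t\le 3^{-1/2}\diam(K)$; this Jung-type extremum is precisely what produces the constant.

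Next I would show that away from critical points the level sets are Lipschitz graphs. If $x_0$ is non-critical at level $t_0$, then $0\notin\mathrm{conv}\{(x_0-y)/t_0:\ y\in\Sigma(x_0)\}$; by compactness of $\Sigma(x_0)$ and upper semicontinuity of the (set-valued) nearest-point map this persists nearby, yielding a unit vector $e$ and a $\delta>0$ such that every element of the generalized gradient of $f$ at every point near $x_0$ has inner product with $e$ at least $\delta$. Consequently $f$ increases along $e$ near $x_0$ at a uniform rate, and a Lipschitz implicit-function argument represents $\{f=t\}$ near $x_0$ as the graph of a Lipschitz function over the line orthogonal to $e$. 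Since for $t\notin C$ the entire level set consists of non-critical points, covering it by such neighborhoods shows that $\pa\{f>t\}=\{f=t\}$ is a Lipschitz $1$-manifold.

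It remains to prove the measure-theoretic core, that the set of critical values is Lebesgue negligible; taking its closure inside $[0,3^{-1/2}\diam(K)]$ then gives a compact $C$ with $|C|=0$. This is the main obstacle, since Sard's theorem fails for merely Lipschitz functions, so one must exploit the special structure of $f$. I would use that semiconcavity makes $\nabla f$ of local bounded variation on $\R^2\setminus K$, so the singular (ridge) set carrying the critical points is contained in a countably $1$-rectifiable set, and that the inclusion $0\in\mathrm{conv}\{(x-y)/t\}$ at a critical point forces the tangential derivative of $f$ along this set to vanish a.e. Combining these two facts through the area formula for the Lipschitz restriction of $f$ to the critical set yields that its image is $\HH^1$-negligible. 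Making this estimate rigorous---in particular justifying the a.e. vanishing of the tangential derivative and controlling the full, possibly wild, critical set of an arbitrary compact $K$---is the delicate point where the full strength of Fu's analysis of the distance function is required.
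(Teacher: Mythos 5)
First, a note on context: the paper does not prove this statement at all --- it is quoted from \cite{Fu85} and used as a black box --- so your attempt can only be measured against the cited literature, not against an internal argument. Your framework is the right one, and it is essentially Fu's: criticality of the distance function $f=d(\cdot,K)$ in the Grove--Shiohama sense, the Jung-type computation (a critical point at level $t$ lies, by Carath\'eodory, in the convex hull of at most three nearest points on $\pa B(x,t)$, forcing a chord of length at least $t\sqrt3$, whence $t\le 3^{-1/2}\diam(K)$), and the Clarke-type implicit function argument showing that a level consisting only of regular points is a Lipschitz $1$-manifold (including the identity $\pa\{f>t\}=\{f=t\}$, which indeed follows from the uniform increase along the direction $e$). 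All of this is correct.

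The genuine gap is the step you yourself flag: the Sard-type statement that the set of critical values is Lebesgue-null is asserted, not proved, and it is the entire analytic content of the theorem --- everything else is soft. Your sketch does contain the right dimension-specific mechanism: critical points have at least two nearest points, hence lie in the singular set of the locally semiconcave function $f$, which is countably $1$-rectifiable (Zaj\'{\i}\v{c}ek); criticality means $0\in\partial^+f(x)$, so $f(y)\le f(x)+C|y-x|^2$ locally, which forces the derivative of the restriction of $f$ to any Lipschitz curve of the singular set to vanish at a.e.\ critical point; and for a Lipschitz function $g$ of one variable the area estimate $|g(E)|\le\int_E|g'|\,d\HH^1$ then kills the image. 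This is completable in the plane, and it is worth stressing that the argument genuinely needs $d=2$: for $d\ge3$ the singular set is $2$-rectifiable and a vanishing tangential gradient a.e.\ no longer controls the measure of the image (the low-regularity failure of Sard), consistently with the fact that the theorem is false for general compacta in $\R^3$. But as written, you defer precisely these justifications (a.e.\ differentiability along the rectifiable pieces, the vanishing of that derivative, the covering of the whole critical set) to ``the full strength of Fu's analysis,'' so what you have is a correct plan rather than a proof. A smaller, fixable slip: you define $C$ as the \emph{closure} of the set of critical values and conclude $|C|=0$; the closure of a null set need not be null. The fix is to show that the set of critical values is already compact up to adding $\{0\}$: the critical set is bounded (by the Jung bound) and the Clarke gradient is upper semicontinuous, so any limit of critical points is either critical or lies in $K$, and the critical values together with $0$ form the continuous image of a compact set.
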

We recall now some general facts of geometric measure theory. Let $E\subset \R^2$, we denote by $E^{(t)}$ the set of the points where the density of $E$ is $t\in [0,1]$, that is
$$E^{(t)}:=\{ x\in\R^2: \lim_{r\to 0^+} (\pi r^2)^{-1}|E\cap B_r(x)|=t\}.$$ 
It is well known (see \cite{AFP} Theorem 3.61) that if $E$ is a set of finite perimeter, then $P(E)=\HH^1(E^{1/2})$ and $E$ has density either $0$ or $1/2$ or $1$ at $\HH^1$-a.e $x\in\R^2$. In particular it holds
\be\label{eq.decomp}\HH^1(\partial E)= \HH^1(\partial E\cap E^{(0)})+ \HH^1(\partial E\cap E^{(1)})+ P(E),
\ee
which implies 
\be \label{eq.decomp2}
P(E)+2\HH^1(\pa E\cap E^{(1)})\le 2 \HH^1(\pa E)-P(E).
\ee
The Minkowski content and the outer Minkowski content of $E$ are, respectively, defined as
$$\mathcal{M}(E):=\lim_{t\to 0}\frac {|\{x\in\R^2\ :\ d(x,E)\le t\}|}{2t},$$
and 
$$\mathcal{SM}(E):=\lim_{t\to 0}\frac {|\{x\in\R^2\ :\ d(x,E)\le t\}\setminus E|}{t},$$
whenever the limits above exist. 

We say that a compact set $E\subset\R^2$ is $1$-rectifiable if there exists a compact set $K\subset\R$ and a Lipschitz map $f:\R\to \R^2$ such that $f(K)=E$. Any compact connected set of $\R^2$, namely a \textit{continuum}, with finite $\HH^1$-measure is $1$-rectifiable (see, for instance, Theorem 4.4 in \cite{AO}). Finally, if $E$ is $1$-rectifiable then
\be\label{eq.Amb}
\mathcal{M}(E)=\HH^1(E)
\ee (see Theorem $2.106$ in \cite{AFP})
and by Proposition 4.1 of \cite{V}, if $E$ is a Borel set and $\pa E$ is $1$-rectifiable it holds
\be\label{eq.Villa}
\mathcal{SM}(E)=P(E)+2\HH^1(\pa E\cap E^{(0)}).
\ee
%Note that, when $\partial \O$ is $1$-rectifiable, since $ \mathcal{SM}^{1}(\O)\leq \mathcal{M}^{1}(\pa\O)$ it holds $$H^1(\partial \O\cap \O^{(0)})\leq H^1(\partial \O\cap \O^{(1)}).$$
%Combining these results we prove the following theorem which will be useful for our purposes.
Next two results are easy consequence of \eqref{eq.Amb} and \eqref{eq.Villa}.

\begin{theo}\label{theo.min}
Let $\O$ be a bounded open set with $\HH^1(\pa \O)<\infty$ and $\#\pa\O<+\infty$. Then $
\mathcal{M}(\pa\O)=\HH^{1}(\pa \O)$ and $ \mathcal{SM}(\O)=P(\O)+2 \HH^1(\partial \O\cap \O^{(0)}).$
\end{theo}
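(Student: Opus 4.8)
The plan is to reduce both identities to the rectifiability results $\eqref{eq.Amb}$ and $\eqref{eq.Villa}$ already recorded above. Once we know that $\pa\O$ is $1$-rectifiable, the first identity $\mathcal{M}(\pa\O)=\HH^1(\pa\O)$ is exactly $\eqref{eq.Amb}$ applied to the compact set $E=\pa\O$, and the second identity $\mathcal{SM}(\O)=P(\O)+2\HH^1(\pa\O\cap\O^{(0)})$ is exactly $\eqref{eq.Villa}$ applied to the Borel set $E=\O$. Thus the whole proof amounts to verifying that $\pa\O$ is $1$-rectifiable in the sense of the definition given above.

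To do this, first I would use the hypothesis $\#\pa\O<+\infty$: since $\O$ is bounded, $\pa\O$ is compact, and it has only finitely many connected components $\G_1,\dots,\G_N$. Each $\G_j$ is a closed and connected subset of the compact set $\pa\O$, hence a continuum, and it satisfies $\HH^1(\G_j)\le\HH^1(\pa\O)<+\infty$. By the cited result (Theorem 4.4 in \cite{AO}), every continuum of finite $\HH^1$-measure is $1$-rectifiable, so for each $j$ there exist a compact set $K_j\subset\R$ and a Lipschitz map $f_j:\R\to\R^2$ with $f_j(K_j)=\G_j$.

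It then remains to glue these finitely many parametrizations into a single one, which I expect to be the only technical point. I would translate the $K_j$ into pairwise disjoint compact sets $K_j'=K_j+t_j$, choosing the translations $t_j$ so that distinct $K_i'$ and $K_j'$ are separated by a distance at least $\diam(\pa\O)/L$, where $L:=1+\max_j\lip(f_j)$. Defining $g(x):=f_j(x-t_j)$ for $x\in K_j'$ yields a map on the compact set $K:=\bigcup_{j=1}^N K_j'$ which is $L$-Lipschitz: on each $K_j'$ it has constant $\lip(f_j)\le L$, while for points $x\in K_i'$ and $y\in K_j'$ with $i\ne j$ the large separation gives $|g(x)-g(y)|\le\diam(\pa\O)\le L|x-y|$. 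A McShane (componentwise) extension then produces a globally Lipschitz $f:\R\to\R^2$ with $f|_K=g$, so that $f(K)=\bigcup_j\G_j=\pa\O$. Hence $\pa\O$ is $1$-rectifiable, and the two identities follow as explained. The main obstacle is precisely this gluing step, since the definition of $1$-rectifiability in use requires a single Lipschitz curve; the spacing-out argument above is what guarantees that the assembled map stays globally Lipschitz.
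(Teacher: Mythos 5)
Your proof is correct and follows essentially the same route as the paper: decompose $\pa\O$ into its finitely many connected components, invoke the rectifiability of continua with finite $\HH^1$-measure from \cite{AO}, conclude that the finite union of these compact, pairwise disjoint continua is $1$-rectifiable, and then apply \eqref{eq.Amb} and \eqref{eq.Villa}. The only difference is that the paper dismisses the gluing of the finitely many Lipschitz parametrizations as an easy verification, whereas you carry it out explicitly via the spacing-out and McShane extension argument, which is a valid way to fill in that step.
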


\begin{proof}
Since $\HH^1(\pa\O)<\infty$, each connected component of $\pa\O$ is $1$-rectifiable. Being the connected components pairwise disjoint and compact, we easily prove that their finite union is $1$-rectifiable. Then, applying \eqref{eq.Amb} and \eqref{eq.Villa}, we get the thesis.
\end{proof}

\begin{coro}\label{coro.Mink}
Let $\O$ be an open set such that $\HH^1(\pa\O)<\infty$ and $\#\pa\O<+\infty$. Then there exists
$$\lim_{r\to 0^+}\frac 1 r\int_{0}^r L(t)dt= P(\O)+2 \HH^1(\partial \O\cap \O^{(1)}).$$
\end{coro}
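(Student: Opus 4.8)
The goal is to establish the existence of $\lim_{r\to 0^+}\frac1r\int_0^r L(t)\,dt$ and to identify it with $P(\O)+2\HH^1(\pa\O\cap\O^{(1)})$.

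The plan is to relate the quantity $\frac1r\int_0^r L(t)\,dt$ to the outer Minkowski content of $\O$, for which Theorem \ref{theo.min} already furnishes the value. First I would use the coarea identity \eqref{eq.Evans}, which gives $A(0)-A(r)=\int_0^r L(t)\,dt$, where $A(t)=|\O(t)|$. Observe that $A(0)=|\O|$ and that $|\O|-A(r)=|\{x\in\O\ :\ d(x,\pa\O)\le r\}|$ is the measure of the inner tubular neighborhood of $\pa\O$ inside $\O$. The key is to compare this inner neighborhood with the full two-sided Minkowski neighborhood $\{x\ :\ d(x,\pa\O)\le r\}$ appearing in the definitions of $\mathcal{M}(\pa\O)$ and $\mathcal{SM}(\O)$.

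Next I would decompose the symmetric neighborhood $\{d(x,\pa\O)\le r\}$ into its part inside $\O$ and its part in $\O^c$. The inner part is exactly $\{x\in\O\ :\ d(x,\pa\O)\le r\}$, whose measure is $\int_0^r L(t)\,dt$ by the above; the outer part is $\{x\in\O^c\ :\ d(x,\pa\O)\le r\}=\{d(x,\O)\le r\}\setminus\O$ (up to the negligible set $\pa\O$ itself), whose measure is precisely the quantity defining $\mathcal{SM}(\O)$. Since the hypotheses $\HH^1(\pa\O)<\infty$ and $\#\pa\O<+\infty$ let me invoke Theorem \ref{theo.min}, both $\mathcal{M}(\pa\O)=\HH^1(\pa\O)$ and $\mathcal{SM}(\O)=P(\O)+2\HH^1(\pa\O\cap\O^{(0)})$ exist. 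Dividing the decomposition by $r$ and passing to the limit, the limit of $\frac1r\int_0^r L(t)\,dt$ exists and equals $2\mathcal{M}(\pa\O)-\mathcal{SM}(\O)=2\HH^1(\pa\O)-P(\O)-2\HH^1(\pa\O\cap\O^{(0)})$.

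Finally I would simplify this expression using the geometric measure theory decomposition \eqref{eq.decomp}, namely $\HH^1(\pa\O)=\HH^1(\pa\O\cap\O^{(0)})+\HH^1(\pa\O\cap\O^{(1)})+P(\O)$, which holds because a set of finite perimeter has density $0$, $1/2$, or $1$ at $\HH^1$-a.e.\ point and $P(\O)=\HH^1(\O^{(1/2)})$. Substituting, the value $2\HH^1(\pa\O)-P(\O)-2\HH^1(\pa\O\cap\O^{(0)})$ collapses to $P(\O)+2\HH^1(\pa\O\cap\O^{(1)})$, which is the claimed identity. The main obstacle I anticipate is the careful bookkeeping in splitting the symmetric Minkowski neighborhood so that the inner piece matches $\int_0^r L(t)\,dt$ exactly (controlling the boundary overlap $\{d(x,\pa\O)=r\}$ and the coincidence of $\{d(x,\pa\O)\le r,\ x\in\O\}$ with the level-set integral); once this splitting is justified, the rest is a direct algebraic combination of the two content formulas and the density decomposition.
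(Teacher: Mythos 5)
Your proposal is correct and follows essentially the same route as the paper: both arguments split the full tubular neighborhood of $\pa\O$ into its inner part (identified with $\int_0^r L(t)\,dt$ via the coarea formula) and its outer part, invoke Theorem \ref{theo.min} to evaluate $\mathcal{M}(\pa\O)=\HH^1(\pa\O)$ and $\mathcal{SM}(\O)=P(\O)+2\HH^1(\pa\O\cap\O^{(0)})$, and then simplify $2\HH^1(\pa\O)-P(\O)-2\HH^1(\pa\O\cap\O^{(0)})$ to $P(\O)+2\HH^1(\pa\O\cap\O^{(1)})$ using \eqref{eq.decomp}. The bookkeeping concern you raise at the end is harmless, since $\HH^1(\pa\O)<\infty$ forces $|\pa\O|=0$, so the overlap sets are Lebesgue negligible.
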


\begin{proof}
 We denote by $L^c(t)$ the following quantity %lenght of the \textit{exterior parallel} at distance $t$ from $\pa\O$, that is
$$L^c(t):=\HH^1(\{x\in\O^c\ :\ d(x,\pa\O)=t\}).$$
By applying coarea formula and Theorem \ref{theo.min}, it holds
\be\label{eq.g1}
\lim_{r\to 0^+} \frac{1}{r}\int_{0}^{r}L^c(t)dt=\mathcal{SM}(\O)=P(\O)+2 \HH^1(\partial \O\cap \O^{(0)}).
\ee
and 
\be\label{eq.min}
\lim_{r\to 0^+}\frac{1}{r}\int_{0}^{r}\left[L(t)+L^c(t)\right]dt=2\mathcal{M}(\pa\O)=2\HH^1(\pa \O).
\ee
Combining \eqref{eq.decomp}, \eqref{eq.g1} and \eqref{eq.min} we get 
$$
\lim_{r\to 0^+}\frac{1}{r}\int_{0}^{r}L(t) dt=
\lim_{r\to 0^+}\left(\frac{1}{r}\int_{0}^{r}L(t) dt+\frac{1}{r}\int_{0}^{r}L^c(t) dt-\frac{1}{r}\int_{0}^{r}L^c(t) dt\right)$$
$$= 2\HH^1(\pa \O)-P(\O)-2 \HH^1(\partial \O\cap \O^{(0)})= P(\O)+2 \HH^1(\partial \O\cap \O^{(1)}) $$
and the thesis is achieved.
\end{proof}
Most of the results we present rely on a geometrical theorem proved by Sz. Nagy in \cite{Nagy59}, concerning the behavior of the function $t\to A(t)=|\O(t)|$ for a given set $\O\in\A_k$.
\begin{theo}[Sz. Nagy]\label{theo.Na}
Let $\O\in\A_k$ and let $\alpha:=\#\O^c$. Then the function 
$$t\mapsto-A(t)-(\alpha-1)\pi t^{2}$$
is concave in $[0,\rho(\O))$. 
\end{theo}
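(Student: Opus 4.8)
The plan is to reduce the concavity statement to a \emph{length growth inequality} for the interior parallels and then to establish the latter by a first‐variation computation in which the inequality $\tan x\ge x$ is the decisive ingredient. Since $A(t)=\int_t^{\rho(\O)}L(s)\,ds$ by \eqref{eq.Evans}, the function $A$ is absolutely continuous with $A'(t)=-L(t)$ a.e., so the derivative of $g(t):=-A(t)-(\alpha-1)\pi t^2$ is $g'(t)=L(t)-2(\alpha-1)\pi t$. Hence $g$ is concave if and only if
\[
\phi(t):=L(t)-2(\alpha-1)\pi t\quad\text{is non-increasing on }[0,\rho(\O)),
\]
equivalently $L(t_2)-L(t_1)\le 2(\alpha-1)\pi(t_2-t_1)$ for $t_1<t_2$. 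This is the statement I would target directly.

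First I would set up the reduction. Using the semigroup identity $\O(t_2)=\big(\O(t_1)\big)(t_2-t_1)$ (valid because $d(x,\pa\O(t_1))=d(x,\pa\O)-t_1$ on $\O(t_1)$) together with the fact, implicit in the proof of Lemma \ref{lemm.innerA_k}, that the number $\alpha_t:=\#\O(t)^c$ is non-increasing in $t$ (the sets $C^i(t)$ can only merge), it suffices to bound the one-sided right derivative
\[
D^{+}L(s):=\limsup_{\tau\to0^+}\frac{L(s+\tau)-L(s)}{\tau}\le 2(\alpha_s-1)\pi\le 2(\alpha-1)\pi .
\]
Indeed $D^{+}\phi(s)=D^{+}L(s)-2(\alpha-1)\pi\le0$, and a continuous function with non-positive upper right Dini derivative is non-increasing. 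By Theorem \ref{theo.Fu} the set $\O(s)$ has Lipschitz boundary for a.e.\ $s$, which is the regularity one needs to compute $D^+L(s)$.

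The geometric heart is the first variation of $L$ under the inward unit-speed flow applied to a (Lipschitz, piecewise smooth) set $\O(s)$ with $\alpha_s$ holes. Orienting $\pa\O(s)$ with $\O(s)$ on the left, Gauss--Bonnet gives $\int_{\mathrm{sm}}\kappa\,ds+\sum_{\mathrm{corners}}\beta=2\pi(1-\alpha_s)$, where $\beta$ denotes exterior angles (positive at convex, negative at reflex corners). Shortening of the smooth arcs contributes $-\int_{\mathrm{sm}}\kappa\,ds$, each reflex corner opens a circular arc contributing $-\beta_j$, and each convex corner contributes $-2\tan(\beta_i/2)$ (its vertex recedes along the bisector at speed $1/\sin(\theta_i/2)$); hence
\[
D^{+}L(s)=-\int_{\mathrm{sm}}\kappa\,ds-\sum_{\mathrm{rfx}}\beta_j-2\sum_{\mathrm{cvx}}\tan(\beta_i/2).
\]
Subtracting $2(\alpha_s-1)\pi=-\int_{\mathrm{sm}}\kappa\,ds-\sum_{\mathrm{cvx}}\beta_i-\sum_{\mathrm{rfx}}\beta_j$, the smooth and reflex terms cancel and the required bound $D^{+}L(s)\le2(\alpha_s-1)\pi$ collapses to
\[
\sum_{\mathrm{cvx}}\big(2\tan(\beta_i/2)-\beta_i\big)\ge0,
\]
which holds because $\tan x\ge x$ on $[0,\pi/2)$. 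Thus convex corners lose length faster than the linear-in-angle rate, while smooth arcs and reflex corners lose/gain it exactly at that rate, and the topological count $1-\alpha_s$ produces the coefficient $(\alpha-1)\pi$.

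The main obstacle is rigor at the exceptional times. The pointwise derivative bound is clean where $\O(t)$ is Lipschitz, i.e.\ for a.e.\ $t$, but to conclude monotonicity of the merely continuous function $\phi$ one cannot ignore a null set of times: one must control the singular/jump part of $L$ at the discrete set of ``events'' (holes closing, parallels self-touching along the medial axis), and in particular exclude upward jumps of $L$ there. I would handle this either by showing that at such events $L$ can only drop (the disappearing branch of $\{d=t\}$ has length tending to $0$, consistent with Corollary \ref{coro.Mink}), or by approximating $\O$ with polygons, proving the inequality on each interval between events by the explicit Steiner-type computation above and passing to the limit; the delicate point in the latter route is preserving the topological constraint $\#\O^c\le\alpha$ and the convergence $A_n(t)\to A(t)$ under the limit.
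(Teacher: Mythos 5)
The paper does not prove this statement at all: it is imported verbatim from Sz.~Nagy \cite{Nagy59}, so your proposal must be judged as a self-contained proof of that classical theorem, and as such it has two genuine gaps --- both of which you flag yourself, and neither of which you close. The first is a regularity mismatch. Theorem \ref{theo.Fu} gives that $\pa\O(s)$ is a \emph{Lipschitz} manifold for a.e.\ $s$, but your formula for $D^+L(s)$ presupposes a piecewise-$C^2$ curve with finitely many corners: for a merely Lipschitz curve the quantities $\int_{\mathrm{sm}}\kappa\,ds$ and $\sum\beta$ need not even be defined, Gauss--Bonnet in the form you use is unavailable, and there is no first-variation formula. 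Moreover, even at a good time $s$, the evolution $\tau\mapsto\O(s+\tau)$ is not governed by the local data of $\pa\O(s)$ alone: shocks (collisions of wavefronts coming from distant parts of the boundary, i.e.\ the medial axis) create new corners and delete arcs, and your bookkeeping silently assumes they are absent. This can in principle be repaired (inner parallel sets enjoy an exterior-ball condition of radius $s$, hence one-sided curvature bounds, and shocks only destroy length), but that repair is precisely the technical substance of the theorem and is missing from the proposal.

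The second gap is the passage from the infinitesimal bound to concavity. Since $A$ is absolutely continuous with $A'=-L$ a.e., concavity of $g$ is equivalent to $L(t)-2\pi(\alpha-1)t$ agreeing a.e.\ with a non-increasing function; your route to this is ``a continuous function with non-positive upper right Dini derivative is non-increasing''. But $L$ is \emph{not} continuous (it jumps, for instance, at the inradius of a rectangle, where the parallel degenerates onto the medial axis), so that lemma does not apply to $\phi$; and a bound on $D^+L$ holding only a.e.\ is in any case insufficient --- Cantor-type functions have vanishing derivative a.e.\ and still increase, and excluding upward jumps does not exclude a singular continuous increasing part. What is actually needed is the integrated inequality $L(t_2)\le L(t_1)+2\pi(\alpha-1)(t_2-t_1)$ for a.e.\ $t_1<t_2$, which via your (correct) semigroup identity $\O(t_2)=\bigl(\O(t_1)\bigr)(t_2-t_1)$ amounts to proving \eqref{eq.boundL} for Lipschitz sets from scratch; note that in the paper \eqref{eq.boundL} is a \emph{consequence} of Sz.~Nagy's theorem (Theorem \ref{theo.Nareg}), so any appeal to it would be circular. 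Your two suggested fixes (showing $L$ has no upward jumps; polygonal approximation preserving $\A_k$ and the areas) are exactly the missing proofs and are left as sketches. To be clear about what is right: the mechanism you isolate --- Gauss--Bonnet plus $2\tan(\beta/2)\ge\beta$ --- is the correct heuristic heart of the result, and your reductions (absolute continuity of $A$, monotonicity of $\#\O(t)^c$, the semigroup property) are sound; but the proposal reduces the theorem to claims essentially equivalent to it and does not prove them.
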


As a consequence of Corollary \ref{coro.Mink} and Theorem \ref{theo.Na} we have the following result.

\begin{theo}\label{theo.Nareg}
Let $\O\in\A_k$ with $\HH^1(\pa \O)<\infty$ and $\#\O<+\infty$. Then, for a.e. $t\in(0,\rho(\O))$, it holds: 
\begin{align}\label{eq.boundL}
&L(t)\le P(\O)+2 \HH^1(\partial \O\cap \O^{(1)}) +2\pi(k-1)t;\\
\label{eq.boundA}
&A(t)\le (P(\O)+2 \HH^1(\partial \O\cap \O^{(1)}))(\rho(\O)-t)+\pi(k-1)(\rho(\O)-t)^2.
\end{align}
In particular $A\in W^{1,\infty}(0,\rho(\O))$.
\end{theo}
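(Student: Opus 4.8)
The plan is to prove the two displayed estimates separately and then read off the $W^{1,\infty}$ conclusion. Throughout set $\alpha:=\#\O^c\le k$ and $\ell:=P(\O)+2\HH^1(\pa\O\cap\O^{(1)})$, and recall $A(t)=\int_t^{\rho(\O)}L(s)\,ds$, so that $A'(t)=-L(t)$ for a.e. $t$. The two inputs are Nagy's concavity (Theorem \ref{theo.Na}) and the Minkowski-content identity (Corollary \ref{coro.Mink}). A key preliminary observation is that, by Theorem \ref{theo.Fu} together with Lemma \ref{lemm.innerA_k}, the hypotheses $\HH^1(\pa\,\cdot\,)<\infty$, $\#(\cdot)<\infty$ and membership in $\A_k$ are inherited by the inner parallel set $\O(t)$ for a.e. $t$; hence the same two tools may be applied to $\O(t)$ itself.

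First I would prove \eqref{eq.boundL}. By Theorem \ref{theo.Na} the function $g(t):=-A(t)-\pi(\alpha-1)t^2$ is concave on $[0,\rho(\O))$; since $A'=-L$ a.e., its a.e.\ derivative $h(t):=g'(t)=L(t)-2\pi(\alpha-1)t$ is non-increasing. It then suffices to identify $h(0^+)$. The affine part averages to zero, $\frac1r\int_0^r 2\pi(\alpha-1)s\,ds=\pi(\alpha-1)r\to0$, while for the monotone $h$ one has $\frac1r\int_0^r h\to h(0^+)$; so Corollary \ref{coro.Mink} gives $h(0^+)=\lim_{r\to0^+}\frac1r\int_0^r L=\ell$. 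Monotonicity now yields $h(t)\le\ell$, i.e.\ $L(t)\le\ell+2\pi(\alpha-1)t\le\ell+2\pi(k-1)t$, which is \eqref{eq.boundL}.

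For \eqref{eq.boundA} the naive move, integrating \eqref{eq.boundL} over $[t,\rho(\O)]$, only produces the weaker factor $(\rho(\O)^2-t^2)$ instead of $(\rho(\O)-t)^2$. To recover the sharp quadratic term I would instead apply the already proven bound \eqref{eq.boundL} to the inner parallel $\O(t)$, which has inradius $\rho(\O)-t$ and hole count $\alpha_t:=\#\O(t)^c\le\alpha\le k$. Writing $L(t+\sigma)=L_{\O(t)}(\sigma)$ and $\ell_t$ for the effective perimeter of $\O(t)$, this gives $L(t+\sigma)\le\ell_t+2\pi(k-1)\sigma$, and integrating in $\sigma$ over $[0,\rho(\O)-t]$ yields
\begin{equation*}
A(t)\le \ell_t\,(\rho(\O)-t)+\pi(k-1)(\rho(\O)-t)^2 .
\end{equation*}
This already carries the desired $(\rho(\O)-t)^2$ dependence, so the whole difficulty is concentrated in replacing $\ell_t$ by the global constant $\ell$.

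This replacement is where I expect the main obstacle to lie. Applying Corollary \ref{coro.Mink} to $\O$ one finds $\ell_t=L(t^+)=h(t^+)+2\pi(\alpha-1)t$, whence only $\ell_t\le\ell+2\pi(\alpha-1)t$: as soon as holes are present $\ell_t$ genuinely exceeds $\ell$ (a disc with two circular holes realizes $h\equiv\ell$ on an initial interval, with $\ell_t=\ell+2\pi t$ there), so $\ell_t\le\ell$ is false in general. Thus \eqref{eq.boundA} cannot follow from the pointwise estimate alone; it must exploit that the extremal profile $h\equiv\ell$ cannot persist up to $t=\rho(\O)$ when $\alpha\ge2$, equivalently that the deficit obeys $\int_t^{\rho(\O)}(\ell-h)\,ds\ge 2\pi(k-1)\,t(\rho(\O)-t)$. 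Quantifying this deficit is the crux, and it is precisely here that Nagy's \emph{global} concavity together with the topological bound on the number of holes must be used beyond its pointwise consequence. Finally, the last assertion is immediate already from \eqref{eq.boundL}: $A$ is absolutely continuous as the integral of $L\in L^1(0,\rho(\O))$, with $A'=-L$ a.e.; by \eqref{eq.boundL} one has $0\le L(t)\le\ell+2\pi(k-1)\rho(\O)$ a.e., so $A'\in L^\infty$, and since $0\le A\le|\O|$ we conclude $A\in W^{1,\infty}(0,\rho(\O))$.
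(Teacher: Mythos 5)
Your proof of \eqref{eq.boundL} and of the final assertion $A\in W^{1,\infty}(0,\rho(\O))$ is correct and is, in essence, the paper's own argument: the paper likewise works with the (right) derivative $g(t)=L(t)-2\pi(\alpha-1)t$ of Sz.--Nagy's concave function, uses its monotonicity to write $\lim_{r\to0^+}\frac1r\int_0^rL(s)\,ds=\sup_{(0,\rho(\O))}g$, identifies this limit with $\ell:=P(\O)+2\HH^1(\pa\O\cap\O^{(1)})$ via Corollary \ref{coro.Mink}, and concludes; the Lipschitz bound on $A$ then comes from \eqref{eq.Evans} exactly as you say. Up to there, nothing to object to.

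The gap is that you never prove \eqref{eq.boundA}: the proposal stops at ``quantifying the deficit'', so as a proof of the stated theorem it is incomplete. What is instructive is how the paper handles this point: its entire proof of \eqref{eq.boundA} is the phrase ``by applying \eqref{eq.Evans}'', i.e.\ precisely the naive integration of \eqref{eq.boundL} over $(t,\rho(\O))$ that you discarded. That integration yields
\begin{equation*}
A(t)\le \ell\,\bigl(\rho(\O)-t\bigr)+\pi(k-1)\bigl(\rho(\O)^2-t^2\bigr),
\end{equation*}
which implies the stated inequality when $k\le1$ (then $k-1\le0$, and $\rho^2-t^2\ge(\rho-t)^2$ works in the right direction), but is strictly weaker than it when $k\ge2$. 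So for $k\ge2$ your objection exposes a defect of the paper rather than of your argument, and in fact the stated form of \eqref{eq.boundA} fails for large $k$. Take $\O=B_R\setminus\{x_1,\dots,x_k\}$, the $x_i$ being the points of a square grid of spacing $\sqrt2\,\rho$ contained in $B_R$: then $\O\in\A_k$, $\#\O=1$, $\HH^1(\pa\O)=2\pi R<\infty$, $\rho(\O)=\rho$, $\ell=2\pi R$, and $k\le\frac\pi2R^2/\rho^2+O(R/\rho)$. At $t=\rho/2$ the balls $B_t(x_i)$ are pairwise disjoint, so $A(t)\ge\pi(R-t)^2-k\pi t^2$; plugging this into \eqref{eq.boundA} forces $(R-\rho)^2\le\frac{k+1}{2}\rho^2$, i.e.\ $k\ge 2(R-\rho)^2/\rho^2-1$, which contradicts $k\le\frac\pi2R^2/\rho^2+O(R/\rho)$ when $R/\rho$ is large, since $\pi/2<2$. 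Your ``deficit'' inequality is, as you half-suspected, just a restatement of \eqref{eq.boundA} itself, so it fails in the same example; no argument can close that route. The practical conclusion: finish your proof with the naive integration, which gives \eqref{eq.boundA} as stated for $k\le1$ and the corrected bound with $\pi(k-1)(\rho(\O)^2-t^2)$ for all $k$. This is harmless for the rest of the paper, because \eqref{eq.boundA} is only ever invoked at $t=0$ (in Lemma \ref{lem.coarea2}, hence in Theorem \ref{theo.exis}), where $(\rho(\O)-t)^2$ and $\rho(\O)^2-t^2$ coincide.
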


\begin{proof}
We denote by $g(t)$ the right derivative of the function $t\mapsto -A(t)-(\alpha-1)\pi t^2$ where $\alpha:=\#\O^c$ $(\le k)$. By Theorem \ref{theo.Na}, $g$ is a decreasing function in $(0,\rho(\O))$ and an easy computation through \eqref{eq.Evans} shows that 
\be\label{Lg}
g(t)=L(t)-2\pi(\alpha-1)t\qquad\hbox {for a.e. }t\in(0,\rho(\O)).
\ee
Thus,
$$\lim_{r\to 0^+}\frac 1 r\int_{0}^r L(t)dt=\lim_{r\to 0^+}\frac 1 r\int_{0}^rg(t)dt=\sup_{(0,\rho(\O))}g(t).$$
Since $\O\in\A_k$ and $\#\O<\infty$ we have also $\#\pa\O<\infty$. Hence we can apply Corollary \ref{coro.Mink} to get
\be\label{eq.g}
P(\O)+2 \HH^1(\partial \O\cap \O^{(1)})=\sup_{(0,\rho(\O))}g(t).
\ee
By using \eqref{Lg} and \eqref{eq.g}, inequality \eqref{eq.boundL} easily follows. Finally, by applying \eqref{eq.Evans}, we get both $A\in W^{1,\infty}(0,\rho(\O))$ and formula \eqref{eq.boundA}.
\end{proof}

The following lemma can be easily proved by lower semicontinuity property of the perimeter.
\begin{lemm}\label{lem.top1}
Let $\Omega\subset\R^2$ be an open set. Let $(\Omega^i)$ be its connected components and $\O_n:=\bigcup_{i=1}^n\O^i$. Then we have:
\begin{enumerate}
\item[(i)]$\pa\O_n=\bigcup_{i=1}^n\pa\O^i\subseteq\pa\O$ and $\HH^1(\pa\O_n)\le \HH^1(\pa\O)$;
\item [(ii)]$\ds P(\O)\le\liminf_{n\to\infty}P(\O_n)\le\limsup_{n\to\infty}P(\O_n)\le\limsup_{n\to\infty}\HH^1(\pa\O_n)\le \HH^1(\pa\O)$.
\end{enumerate}
\end{lemm}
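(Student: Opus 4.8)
The plan is to treat the two assertions separately, deriving (i) from the elementary topology of connected components and then assembling (ii) from (i), the lower semicontinuity of the perimeter, and the general bound $P(E)\le\HH^1(\pa E)$.

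For (i), I would first record that the connected components $\O^i$ of the open set $\O$ are themselves open, pairwise disjoint, and at most countably many. Since $\O_n=\bigcup_{i=1}^n\O^i$ is a \emph{finite} union, its closure is $\overline{\O_n}=\bigcup_{i=1}^n\overline{\O^i}$, and I would show $\pa\O_n=\bigcup_{i=1}^n\pa\O^i$ by a direct inclusion argument: if $x\in\pa\O^i$ then $x\in\overline{\O_n}$, while $x$ cannot lie in any $\O^j$ because each $\O^j$ is open and disjoint from $\O^i$, whence $x\notin\O_n$ and $x\in\pa\O_n$; the reverse inclusion is immediate. The same disjointness-and-openness argument gives $\pa\O^i\subseteq\pa\O$: a boundary point of $\O^i$ lies in $\overline{\O}$ but cannot belong to any component, hence lies in $\pa\O$. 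Monotonicity of $\HH^1$ then yields $\HH^1(\pa\O_n)\le\HH^1(\pa\O)$.

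For (ii), the chain is built inequality by inequality. Since $\O_n\subseteq\O_{n+1}$ and $\bigcup_n\O_n=\O$, the characteristic functions satisfy $\chi_{\O_n}\uparrow\chi_\O$ pointwise, hence in $L^1_{loc}$; the lower semicontinuity of the perimeter recalled in Section \ref{spre} gives $P(\O)\le\liminf_{n}P(\O_n)$, which is the first inequality. The middle inequality $\liminf\le\limsup$ is trivial. For the third, I would invoke the general fact that for any set of finite perimeter $P(E)\le\HH^1(\pa E)$, which is exactly what the decomposition \eqref{eq.decomp} yields upon dropping the two nonnegative terms $\HH^1(\pa E\cap E^{(0)})$ and $\HH^1(\pa E\cap E^{(1)})$; applying this with $E=\O_n$ and passing to $\limsup$ gives $\limsup_n P(\O_n)\le\limsup_n\HH^1(\pa\O_n)$. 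Finally the bound $\HH^1(\pa\O_n)\le\HH^1(\pa\O)$ from (i) makes the last $\limsup$ at most $\HH^1(\pa\O)$.

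The argument is essentially a bookkeeping of standard facts, so no step is genuinely hard; the only points requiring a moment of care are the purely topological identity $\pa\O_n=\bigcup_{i=1}^n\pa\O^i$ and the inclusion $\pa\O^i\subseteq\pa\O$, both of which hinge on the openness and mutual disjointness of the components (and would fail for an arbitrary subfamily of an arbitrary set), together with checking that \eqref{eq.decomp} is available for each $\O_n$, i.e. that $\O_n$ has finite perimeter whenever $\HH^1(\pa\O_n)<\infty$ (when $\HH^1(\pa\O_n)=\infty$ the third inequality is vacuous).
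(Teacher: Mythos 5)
Your proof is correct and follows exactly the route the paper indicates (the paper gives no detailed proof, only the remark that the lemma ``can be easily proved by lower semicontinuity property of the perimeter''): elementary topology of open, pairwise disjoint components for (i), then lower semicontinuity of $P$ under $L^1_{loc}$ convergence together with $P(E)\le\HH^1(\pa E)$ for (ii). The one point you flag but leave unresolved --- that $\HH^1(\pa\O_n)<\infty$ implies $\O_n$ has finite perimeter, so that \eqref{eq.decomp} is applicable --- is the standard Federer criterion (Proposition 3.62 in the paper's reference \cite{AFP}), so what is missing is only a citation, not an idea.
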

%\begin{proof} 
%First of all we show that if $A,B$ are disjoint open sets then
%\be\label{union}\pa A \cup \pa B = \pa (A\cup B).\ee
%The inclusion $\pa (A\cup B)\subseteq \pa A \cup \pa B$ is trivial.
%Vice versa if for instance $x\in\pa B$ then, since $A$ is open, we have that $x\in A^c$. Thus $x\in \bar B\setminus (A\cup B)\subseteq \overline{A\cup B}\setminus (A\cup B)=\pa (A\cup B)$. From \eqref{union} we easily get the equality part of the statement. 
%Then notice that if $x\in \pa \O^h$ for some $h\in\N$ then $x\in\O^c$ since otherwise $\O^m \cap\O^h\not= \emptyset$ for some $m\in\N$, such that $m\not=h$.
%The first part of the lemma is easy and we prove only the second one. Assume $\O$ to be a set of finite perimeter then by lower semicontinuity of the perimeter with respect to the $L^1$-convergence we have $$\liminf_{n\to\infty}\HH^1(\O_n)\geq \liminf_{n\to\infty}P(\O_n)\ge P(\O).$$ Finally
%$\HH^1(\pa\O_n)\leq \HH^1(\pa \O).$
%Then 
%$$ \limsup_{n\to\infty}P(\O_n)\leq \limsup_{n\to\infty}\HH^1(\pa \O_n)\leq \HH^1(\pa \O).$$
%\end{proof} 

We are now in a position to prove the main results of this section. In Theorem 1.1 of \cite{Sc15} it is shown that, given any set $\O$ of finite perimeter satisfying $\HH^1(\pa\O)=P(\O)$, it is possible to approximate $P(\O)$ with the perimeters of smooth open sets compactly contained in $\O$. Here we show that, if we assume the further hypothesis $\O\in\A_k$, then we can construct an approximation sequence made up of Lipschitz sets in $\A_k$.

\begin{theo}\label{theo.approxim}
Let $\O\in\A_k$ be a set of finite perimeter. Then there exists an increasing sequence $(A_n)\subset \A_k$ such that: 
\begin{enumerate}
\item [(i)] $\overline A_n\subset \O$;
\item[(ii)] $\bigcup_{n} A_n=\O$;
\item[(iii)] $A_n$ is a Lipschitz set;
\item[(iv)] $\ds P(\O)\le\liminf_{n\to\infty}P(A_n)\le\limsup_{n\to\infty}P(A_n)\le2\HH^1(\pa\O)-P(\O)$.
\end{enumerate}
%In particular if $\HH^1(\pa \O)=P(\O)$ then $\lim_{n\to \infty}P(A_n)=P(\O).$
In addition, if $\# \O<\infty$, then
$$\lim_{n\to\infty}P(A_n)=P(\O)+2 \HH^1(\partial\O\cap\O^{(1)}).$$
\end{theo}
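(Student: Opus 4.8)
The plan is to construct the approximating sequence $(A_n)$ by combining two ideas: the interior-parallel smoothing from Theorem \ref{theo.Fu} (which delivers Lipschitz boundaries for a.e. $t$) and the approximation result of \cite{Sc15} cited just above (which controls the perimeter of inner approximations of sets satisfying $\HH^1(\partial\O)=P(\O)$). The role of the constraint $\O\in\A_k$ is to guarantee, via Lemma \ref{lemm.innerA_k}, that the smoothing operation does not increase the topological genus, so the approximants stay in $\A_k$.

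First I would address the case $\HH^1(\partial\O)=P(\O)$ directly. Here \cite{Sc15} already produces smooth open sets $U_n$ with $\overline U_n\subset\O$, $\bigcup_n U_n=\O$, and $P(U_n)\to P(\O)$; by intersecting with the interior parallel sets $\O(t_n)$ for a suitable sequence $t_n\downarrow 0$ avoiding the exceptional set of Theorem \ref{theo.Fu}, and taking connected-component truncations as in Lemma \ref{lem.top1}, I would arrange that each $A_n$ is Lipschitz and lies in $\A_k$. In this regime the right-hand bound in (iv) reads $2\HH^1(\partial\O)-P(\O)=P(\O)$, so the sandwich in (iv) collapses to $P(A_n)\to P(\O)$, consistent with the final ``in addition'' formula since then $\HH^1(\partial\O\cap\O^{(1)})=0$.

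For the general case I would work intrinsically with the interior parallel sets. The natural candidates are $A_n=\O(t_n)$ for $t_n\downarrow 0$ chosen outside the measure-zero exceptional set of Theorem \ref{theo.Fu}, so that (iii) holds; monotonicity of $t\mapsto\O(t)$ gives the increasing, nested, exhausting properties (i)--(ii), and Lemma \ref{lemm.innerA_k} keeps them in $\A_k$. The lower bound $P(\O)\le\liminf P(A_n)$ is just lower semicontinuity of the perimeter under $L^1$-convergence, since $\chi_{\O(t_n)}\to\chi_\O$. The upper bound is the crux: one has $P(\O(t))=\HH^1(\partial\O(t))\le L(t)$ for a.e.\ $t$, because $\partial\O(t)\subseteq\{d(x,\partial\O)=t\}$ (noted right after the definition of $L$), so I would estimate $\limsup_n P(A_n)$ by controlling $L(t)$ as $t\to 0$. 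Averaging via Corollary \ref{coro.Mink} gives $\frac1r\int_0^r L(t)\,dt\to P(\O)+2\HH^1(\partial\O\cap\O^{(1)})$ when $\#\O<\infty$, which yields the final formula once one upgrades the average to a genuine limit of $P(A_n)$ along well-chosen $t_n$; for the general bound (iv) I would instead combine \eqref{eq.decomp2} with the relation $\HH^1(\partial\O)=\HH^1(\partial\O\cap\O^{(0)})+\HH^1(\partial\O\cap\O^{(1)})+P(\O)$ to turn $P(\O)+2\HH^1(\partial\O\cap\O^{(1)})$ into the stated $\le 2\HH^1(\partial\O)-P(\O)$.

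The main obstacle I anticipate is passing from the Cesàro-type average of $L(t)$ furnished by Corollary \ref{coro.Mink} to a pointwise statement about $P(\O(t_n))$ along a specific sequence. The average controls $L$ only on average, whereas (iv) and the final formula demand control of $P(A_n)=P(\O(t_n))$ itself; bridging this gap requires using the monotonicity structure of Theorem \ref{theo.Na} (equivalently, that $g(t)=L(t)-2\pi(\alpha-1)t$ is decreasing, from \eqref{Lg}) so that the essential supremum of $g$ governing the average is in fact $\lim_{t\to 0^+}g(t)$, letting me select $t_n$ realizing $L(t_n)\to P(\O)+2\HH^1(\partial\O\cap\O^{(1)})$. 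Reconciling the finite-$\#\O$ exact formula with the coarser bound (iv) valid without that hypothesis—where only the semicontinuity lower bound and the Villa/Minkowski upper bound survive—will also require care in tracking which density terms appear.
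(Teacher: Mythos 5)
Your outline correctly identifies most of the right machinery (Fu's theorem for Lipschitz regularity of a.e.\ parallel set, Lemma \ref{lemm.innerA_k} for membership in $\A_k$, lower semicontinuity for the lower bound in (iv), and the Sz.--Nagy monotonicity of $g(t)=L(t)-2\pi(\alpha-1)t$ as the bridge from Ces\`aro averages to pointwise control), but it has a genuine gap exactly where the theorem is hardest: the case $\#\O=\infty$. A set of finite perimeter in $\A_k$ may have infinitely many connected components, and then $\pa\O$ has infinitely many components, so Corollary \ref{coro.Mink} and Theorem \ref{theo.Nareg} do \emph{not} apply to $\O$: the rectifiability argument behind them (each continuum of finite $\HH^1$-measure is rectifiable, and a \emph{finite} union of such continua is rectifiable, whence Minkowski content equals $\HH^1$) breaks down for infinite unions, where the Minkowski content can strictly exceed $\HH^1$. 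Your claim that in the general case ``the Villa/Minkowski upper bound survives'' is therefore false, and with the candidates $A_n=\O(t_n)$ you have no control whatsoever on $\limsup_n P(A_n)$. The paper's proof avoids this by never applying the parallel-set estimates to $\O$ itself: it sets $\O_n:=\bigcup_{i=1}^n\O^i$ (union of the first $n$ connected components, so $\#\O_n\le n<\infty$), takes $A_n:=\O_n(t_n)$ with $t_n$ chosen via Fu's theorem and \eqref{eq.boundL} applied to $\O_n$, and then transfers the resulting bound $P(\O_n)-\tfrac1n\le P(A_n)\le P(\O_n)+2\HH^1(\pa\O_n\cap\O_n^{(1)})+2\pi(k-1)t_n$ back to $\O$ using Lemma \ref{lem.top1} and \eqref{eq.decomp2}. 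This truncation step is the missing idea in your proposal.

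There is a second, smaller gap in your treatment of the exact limit when $\#\O<\infty$. Choosing $t_n$ so that $L(t_n)\to P(\O)+2\HH^1(\pa\O\cap\O^{(1)})$ only yields the \emph{upper} bound, since $P(A_n)=\HH^1(\pa A_n)\le L(t_n)$ and the inclusion $\pa\O(t_n)\subseteq\{d(\cdot,\pa\O)=t_n\}$ may be strict; for the matching lower bound, plain lower semicontinuity of the perimeter gives only $\liminf_n P(A_n)\ge P(\O)$, which is strictly weaker whenever $\HH^1(\pa\O\cap\O^{(1)})>0$ (internal fractures). The paper closes this by an exact identity rather than a clever choice of $t_n$: applying \eqref{eq.g} to the Lipschitz set $A_n$ itself (for which $\HH^1(\pa A_n\cap A_n^{(1)})=0$) gives $P(A_n)=\sup_t g_n(t)$, and the relation $A_n(t)=\O(t+t_n)$ gives $g_n(t)=g(t+t_n)+2\pi(\alpha-1)t_n$; monotonicity of $g$ then forces $P(A_n)=\lim_{t\to0^+}g(t+t_n)+2\pi(\alpha-1)t_n\to\sup_{(0,\rho)}g=P(\O)+2\HH^1(\pa\O\cap\O^{(1)})$. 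Finally, your special-case detour through the interior approximation of \cite{Sc15} is unnecessary (it is subsumed by the general construction) and leaves unverified that intersecting those smooth sets with $\O(t_n)$ produces sets that are Lipschitz and still in $\A_k$; intersections need not preserve either property in general.
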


\begin{proof}
Let $\O_n$ be defined as in Lemma \ref{lem.top1}. Clearly $\O_n\in\A_k$. 
Since $\O_n(t)$ converges to $\O_n$ in $L^1$ when $t\to0^+$, it follows that, for every $n$,
$$\liminf_{t\to 0^+} P(\O_n(t))\ge P(\O_n).$$
Then there exists $0<\delta_n<1/n\wedge \rho(\O_n)$ such that
\be\label{primacond}P(\O_n(t))\ge P(\O_n)-\frac1n\qquad\forall t<\delta_n.\ee
Since $\#\O_n\le n$, by applying Theorem \ref{theo.Fu}, Lemma \ref{lemm.innerA_k} and Theorem \ref{theo.Nareg} to the set $\O_n$, we can choose a decreasing sequence $(t_n)$ with $0<t_n< \delta_n$ such that the set $A_n:=\O_n(t_n)$ is in $\A_k$, has Lipschitz boundary, and
\be\label{secondacond} 
\HH^1(\{x\in\O_n\ :\ d(x,\pa\O_n)=t_n \})\le P(\O_n)+2\HH^1(\pa\O_n\cap\O_n^{(1)})+2\pi(k-1)t_n.
\ee
It is easy to prove that the sequence $(A_n)$ is increasing and satisfies (i) and (ii).
By putting together \eqref{primacond} and \eqref{secondacond}, we get
\begin{equation*}
P(\O_n)-\frac1n\le P(A_n)\le P(\O_n)+2\HH^1(\pa\O_n\cap\O_n^{(1)})+2\pi(k-1)t_n.
\end{equation*}
By Lemma \ref{lem.top1}, taking also into account \eqref{eq.decomp2}, the previous inequality implies
$$P(\O)\le\liminf_n P(A_n)\le\limsup_n P(A_n)\le2\HH^1(\pa\O)-P(\O)$$
which proves $(iv)$.

To conclude consider the case $\#\O<+\infty$. We can choose $n$ big enough such that $\O_n=\O$, $A_n=\O(t_n)$ and $\alpha:=\# \O^c=\# A_n^c$. For simplicity we denote $\rho_n:=\rho(A_n)$ and $\rho:=\rho(\O)$. By applying equality \eqref{eq.g} to the Lipschitz set $A_n$, we get
\be\label{terza}
P(A_n)=\sup_{ (0,\rho_n)}g_n(t)
\ee
where $g_n$ is the right derivative of the function $t\mapsto -|A_n(t)|-(\alpha-1)\pi t^2$.
Now, exploiting the equality $A_n(t)=\O(t+t_n)$, 
we obtain
$$g_n(t)= g(t+t_n)+2\pi(\alpha-1)t_n$$
for all $0<t<(\rho-t_n)\wedge \rho_n$. Thus, as $t\to 0^+$ and applying \eqref{terza}, we can conclude that, for every $n$, it holds
$$\lim_{t\to 0^+}g(t+t_n)+2\pi(\alpha-1)t_n=\sup_{(0,\rho_n)}g_n(t)=P(A_n).$$
Passing to the limit as $n\to\infty$ in the equality above and taking into account \eqref{eq.g} we achieve the thesis.
\end{proof}

\section{Continuity of volume for co-Hausdorff convergence}\label{shau}

The Hausdorff distance between closed sets $C_1, C_2$ of $\R^2$ is defined by
$$d_H(C_1,C_2):=\sup_{x\in C_1}d(x,C_2)\vee\sup_{x\in C_2}d(x,C_1).$$
Through $d_H$ we can define the so called co-Hausdorff distance $d_{H^c}$ between a pair of bounded open subsets $\O_1,\O_2$ of $\R^2$ 
$$d_{H^c}(\O_1,\O_2):=d_H(\O_1^c,\O_2^c).$$
We say that a sequence of compact sets $(K_n)$ converges in the sense of Hausdorff to some compact set $K$, if $ (d_H(K_n,K))$ converges to zero. In this case we write $K_n\overset{H}{\to}K$. Similarly we say that a sequence of open sets $(\O_n)$ converges in the sense of co-Hausdorff to some open set $\O$, if $(d_{H^c}(\O_n,\O))$ converges to zero, and we write $\O_n\overset{H^c}{\to}\O$. In the rest of the paper we use some elementary properties of Hausdorff distance and co-Hausdorff distance for which we refer to \cite{bubu05} and \cite{He}, (see, for instance, Proposition 4.6.1 of \cite{bubu05}). In particular we recall that if $(\O_n)$ is a sequence of equi-bounded sets in $\A_k$ and $\O_n\overset{H^c}{\to}\O$, then $\O$ still belongs to $\A_k$ (see Remark 2.2.20 of \cite{He}). 

The introduction of co-Hausdorff convergence is motivated by Sver\'ak's Theorem (see \cite{sv93}) which ensures the continuity of the torsional rigidity in the class $\A_k$. Actually the result is stronger and gives the continuity with respect to the $\gamma$-convergence (we refer to \cite{bubu05} for its precise definition and the related details).
 
\begin{theo}[Sver\'ak]\label{theo.Sve}
Let $(\O_n)\subset\A_k$ be a sequence of equi-bounded open sets. If $\O_n\overset{H^c}{\to}\O$, then $\O_n\to\O$ in the $\gamma$-convergence. In particular $T(\O_n)\to T(\O)$.
\end{theo}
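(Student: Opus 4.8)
The plan is to prove the result through the equivalence between $\gamma$-convergence and the Mosco convergence of the Sobolev spaces $H^1_0(\O_n)$ towards $H^1_0(\O)$ (we refer to \cite{bubu05} for this equivalence and for the fact, already recalled above, that the limit $\O$ lies in $\A_k$). Fixing a large ball $B$ containing all the $\O_n$ and extending every function by zero outside its domain, so that $H^1_0(\O_n)\subset H^1_0(B)$, Mosco convergence amounts to two conditions: (M1) for every $u\in H^1_0(\O)$ there exist $u_n\in H^1_0(\O_n)$ with $u_n\to u$ strongly in $H^1_0(B)$; and (M2) whenever $u_n\in H^1_0(\O_n)$ and $u_n\rightharpoonup u$ weakly in $H^1_0(B)$, then $u\in H^1_0(\O)$. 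Once these are established, $\gamma$-convergence follows, and since the torsional rigidity is the value of the Dirichlet energy associated with the right-hand side $f\equiv1$, its continuity $T(\O_n)\to T(\O)$ is an immediate consequence.

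For (M1) I would argue by density and cutoff, a step that does \emph{not} use the topological constraint. Given $u\in H^1_0(\O)$ and $\eps>0$, pick $v\in C^\infty_c(\O)$ with $\|u-v\|_{H^1}\le\eps$ and set $K:=\spt v$, so that $\eta:=d(K,\O^c)>0$. The co-Hausdorff convergence $d_H(\O_n^c,\O^c)\to0$ yields $\sup_{x\in\O_n^c}d(x,\O^c)<\eta$ for $n$ large, hence $\O_n^c\subset\{x:d(x,\O^c)<\eta\}$; since every point of $K$ has distance at least $\eta$ from $\O^c$, this forces $K\subset\O_n$ and therefore $v\in H^1_0(\O_n)$. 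A standard diagonal argument then produces the desired recovery sequence.

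The substance of the proof is (M2), and this is where the planar topological restriction enters decisively. Passing to a subsequence, I would use $\#\O_n^c\le k$ to write $\O_n^c=\bigcup_{i=0}^{\alpha}C_n^i$ as a union of at most $k+1$ closed connected components; by compactness of the Hausdorff topology on equibounded closed sets each $C_n^i$ converges to a compact connected set $C^i$, with $\O^c=\bigcup_i C^i$. Given $u_n\in H^1_0(\O_n)$ with $u_n\rightharpoonup u$, the goal is to show that $u=0$ quasi-everywhere on $\O^c$, that is on each $C^i$. The components $C^i$ degenerating to a single point carry zero $2$-capacity and impose no constraint, so they may be discarded. For a non-degenerate $C^i$ I would invoke the key two-dimensional capacitary estimate: a connected compact set of diameter bounded below carries a definite amount of capacity; quantitatively, any $w\in H^1(B)$ vanishing on a connected set $K$ with $\diam K\ge\delta$ satisfies a bound controlling the $L^2$-mean of $w$ on small balls meeting $K$ by $\|\nabla w\|_{L^2}$ times a logarithmically small factor. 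Applying this to $u_n$ on the sets $C_n^i$ and passing to the weak limit, one deduces that $u$ vanishes quasi-everywhere on each non-degenerate $C^i$, hence on all of $\O^c$, so $u\in H^1_0(\O)$.

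The main obstacle is precisely this last capacitary step. The lower bound on the capacity of connected planar sets in terms of their diameter is a genuinely two-dimensional phenomenon, failing in dimension $\ge3$ where connected sets of large diameter but small capacity exist, and it is here that the bound $\#\O_n^c\le k$ is indispensable: without a uniform control on the number of components, the complements could split into arbitrarily many vanishingly thin pieces whose capacity is lost in the limit, breaking (M2). Patching the local estimates along each $C^i$ into a clean quasi-everywhere vanishing statement is the delicate technical core; the remaining assembly into Mosco, and hence $\gamma$, convergence is then routine and follows \cite{sv93}.
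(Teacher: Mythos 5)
The paper contains no proof of this statement: it is Sver\'ak's classical theorem, quoted directly from \cite{sv93} (see also \cite{bubu05} and \cite{He}), so there is nothing internal to compare your argument against. Your outline is a faithful reconstruction of the standard proof from that literature: the equivalence of $\gamma$-convergence with Mosco convergence of the spaces $H^1_0(\O_n)$, the first Mosco condition obtained from compact supports together with the co-Hausdorff convergence (correct, and indeed independent of the topological constraint), and the second Mosco condition via the decomposition of $\O_n^c$ into at most $k+1$ connected pieces combined with the genuinely two-dimensional lower bound on capacity of connected sets in terms of diameter --- which is exactly where the hypothesis $\#\O_n^c\le k$ is used, as you correctly emphasize. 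Two points remain schematic in your write-up, both settled in \cite{sv93}: the Hausdorff compactness step must be adapted for the unbounded component of $\O_n^c$, which is not an equi-bounded compact set (one works in a large ball, keeping the exterior attached, or in the one-point compactification); and the passage from the local Maz'ya--Poincar\'e estimates on balls meeting $C_n^i$ to the quasi-everywhere vanishing of the weak limit $u$ on $C^i$ requires the fine-continuity properties of quasi-continuous representatives. Since you explicitly defer this technical core to \cite{sv93}, your proposal is best read as a correct road map of the known proof rather than a self-contained alternative; in that sense it is aligned with the paper's treatment, which simply invokes the result.
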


Combining Sver\'ak theorem and Theorem \ref{theo.approxim}, we prove that we can equivalently minimize the functional $F_q$ either in the class of Lipschitz set in $\A_k$ or in the larger class of those sets $\O\in\A_k$ satisfying $P(\O)=\HH^1(\pa\O)$.

\begin{prop}
The following identity holds:
$$m_{q,k}=\inf\{F_q(\O)\ :\ \O\in\A_k,\ P(\O)=\HH^1(\pa\O)\}$$
\end{prop}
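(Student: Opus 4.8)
The plan is to prove the two inequalities separately. The inclusion ``$\le$'' is immediate: every Lipschitz set $\O\in\A_k$ is a set of finite perimeter whose boundary is $1$-rectifiable and satisfies $P(\O)=\HH^1(\pa\O)$, so the competitors defining $m_{q,k}$ form a subclass of those appearing on the right-hand side. Hence $\inf\{F_q(\O):\O\in\A_k,\ P(\O)=\HH^1(\pa\O)\}\le m_{q,k}$.

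For the reverse inequality I would fix an arbitrary $\O\in\A_k$ with $P(\O)=\HH^1(\pa\O)$ and approximate $F_q(\O)$ from within by Lipschitz competitors. Since $\O$ has finite perimeter, Theorem \ref{theo.approxim} furnishes an increasing sequence $(A_n)\subset\A_k$ of Lipschitz sets with $\overline{A_n}\subset\O$ and $\bigcup_n A_n=\O$, satisfying
$$P(\O)\le\liminf_n P(A_n)\le\limsup_n P(A_n)\le 2\HH^1(\pa\O)-P(\O).$$
The key observation is that the equality hypothesis $P(\O)=\HH^1(\pa\O)$ makes the right-hand side collapse to $P(\O)$, so the chain forces $P(A_n)\to P(\O)$. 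Moreover $A_n\uparrow\O$ gives $|A_n|\to|\O|$ by continuity of the Lebesgue measure from below.

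It then remains to show $T(A_n)\to T(\O)$, and this is where I would invoke Sver\'ak's theorem (Theorem \ref{theo.Sve}). For that I first establish the co-Hausdorff convergence $A_n\overset{H^c}{\to}\O$. Since $A_n\subset\O$ one has $\O^c\subset A_n^c$, so only $\sup_{x\in A_n^c}d(x,\O^c)$ has to be controlled; equivalently, for each $\eps>0$ the interior parallel set $\{x:d(x,\O^c)\ge\eps\}$ must eventually be contained in $A_n$. This follows from compactness: the set $\{d(\cdot,\O^c)\ge\eps\}$ is a compact subset of $\O=\bigcup_n A_n$, hence contained in some $A_N$, and then in every $A_n$ with $n\ge N$. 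As the $A_n$ are equi-bounded and lie in $\A_k$, Theorem \ref{theo.Sve} yields $T(A_n)\to T(\O)$. Combining the three limits gives $F_q(A_n)\to F_q(\O)$; since each $A_n$ is Lipschitz, $F_q(A_n)\ge m_{q,k}$, whence $F_q(\O)\ge m_{q,k}$, and taking the infimum over all admissible $\O$ completes the proof.

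The main obstacle is the convergence of the torsional rigidity: unlike the perimeter and the measure it is capacity-sensitive, so its limit is not transparent and the co-Hausdorff convergence of the $A_n$ must be verified carefully in order to apply Sver\'ak's continuity result. The rest---the collapse of the upper perimeter bound under the hypothesis $P(\O)=\HH^1(\pa\O)$ and the monotone measure convergence---is essentially bookkeeping once Theorem \ref{theo.approxim} is in hand.
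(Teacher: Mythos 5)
Your proof is correct and follows essentially the same route as the paper: apply Theorem \ref{theo.approxim} to an admissible $\O$, use the hypothesis $P(\O)=\HH^1(\pa\O)$ to collapse the perimeter bounds in (iv), pass to the limit via Sver\'ak's theorem, and note that the reverse inequality is trivial since Lipschitz sets satisfy $P(\O)=\HH^1(\pa\O)$. In fact you supply a detail the paper leaves implicit, namely the compactness argument showing $A_n\overset{H^c}{\to}\O$, which the paper dismisses with ``by construction.''
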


\begin{proof}
By Theorem \ref{theo.approxim}, for every $\O\in\A_k$ such that $P(\O)=\HH^1(\pa\O)<\infty$, there exists a sequence $(A_n)\subset\A_k$ of Lipschitz sets satisfying $\lim_n P(A_n)=P(\O)$. By construction $(A_n)$ is an equi-bounded sequence which converges both in the co-Hausdorff and in the $L^1$ sense. By Theorem \ref{theo.Sve} we have
$$\lim_{n\to\infty} F_q(\O_n)=F_q(\O),$$
so that 
$$m_{q,k}\le\inf\{F_q(\O)\ :\ \O\in\A_k,\ P(\O)=\HH^1(\pa\O)\}.$$
The thesis is then achieved since the opposite inequality is trivial.
\end{proof}

In general the volume is only lower semicontinuous with respect to the $H^c$-convergence as simple counterexamples may show. In this section we prove that $L^1$-convergence is guaranteed in the class $\A_k$ under some further hypotheses, see Theorem \ref{theo.convmeas}. The proof of this result requires several lemma and relies on the classical Go\l ab's semicontinuity theorem, which deals with the lower semicontinuity of the Hausdorff measure $\HH^1$ (see, for instance, \cite{AO}, \cite{amti04}).

\begin{theo}[Go\l ab]\label{theo.Golab}
Let $X$ be a complete metric space and $k\in\N$ let 
$$\mathcal{C}_k:=\{K\ :\ K\subset X, \ K\text{ is closed},\ \#K\le k\}.$$
Then the function $K\mapsto\HH^1(K)$ is lower semicontinuous on $\mathcal{C}_k$ endowed with the Hausdorff distance.
\end{theo}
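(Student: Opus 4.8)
The plan is to deduce the general statement from the special case $k=1$ of a single continuum, and to prove that case by a weak-$*$ compactness argument for the measures $\HH^1\res K_n$ combined with a density comparison theorem. Since the sets relevant to the paper live in a fixed compact region of $\R^2$, I work there, so that the Blaschke selection theorem and the Euclidean covering/density results are at hand; in the abstract complete metric space of the statement the same scheme goes through with the covering tools of \cite{amti04}.

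\emph{Reduction to connected sets.} Assume the theorem is known when every $K_n$ is connected. Given $K_n\overset{H}{\to}K$ in $\mathcal{C}_k$, I first pass to a subsequence realizing $\liminf_n\HH^1(K_n)$. Writing each $K_n$ as the union of its (at most $k$) connected components, padded with copies of a fixed point so that there are exactly $k$ of them, $K_n=\bigcup_{i=1}^k C_n^i$, the Blaschke selection theorem yields a further subsequence along which $C_n^i\overset{H}{\to}C^i$ for every $i$. Each $C^i$ is compact and connected, as a Hausdorff limit of connected sets, and since $d_H\big(\bigcup_i A_i,\bigcup_i B_i\big)\le\max_i d_H(A_i,B_i)$ one gets $K=\bigcup_{i=1}^k C^i$. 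Using subadditivity of $\HH^1$, the connected case for each $i$, superadditivity of $\liminf$, and the disjointness of the components of $K_n$, I would chain
\[
\HH^1(K)\le\sum_{i=1}^k\HH^1(C^i)\le\sum_{i=1}^k\liminf_n\HH^1(C_n^i)\le\liminf_n\sum_{i=1}^k\HH^1(C_n^i)=\liminf_n\HH^1(K_n),
\]
which is the claim (the right-hand side equals the originally chosen value of the $\liminf$).

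\emph{The connected case.} Suppose now each $K_n$ is a continuum and set $L:=\liminf_n\HH^1(K_n)$, which may be assumed finite. Along a subsequence with $\HH^1(K_n)\to L$, the measures $\mu_n:=\HH^1\res K_n$ have uniformly bounded mass, so up to a further subsequence $\mu_n\rightharpoonup\mu$ weakly-$*$ for a finite measure $\mu$ with $\mu(\R^2)\le L$. It suffices to prove $\mu\ge\HH^1\res K$, for then $\HH^1(K)\le\mu(\R^2)\le L$. By the density comparison theorem this reduces to the lower density estimate
\[
\liminf_{r\to0^+}\frac{\mu\big(\overline{B_r(x)}\big)}{2r}\ge1\qquad\text{for }\HH^1\text{-a.e. }x\in K,
\]
since it gives $\mu\res E\ge\HH^1\res E$ on the set $E$ where it holds, while $\HH^1(K\setminus E)=0$.

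\emph{The main obstacle.} Establishing this density bound \emph{with the sharp constant} is the heart of the matter, and the step I expect to be delicate. As $\overline{B_r(x)}$ is compact, weak-$*$ convergence gives $\mu\big(\overline{B_r(x)}\big)\ge\limsup_n\HH^1\big(K_n\cap\overline{B_r(x)}\big)$, so everything rests on lower bounds for $\HH^1\big(K_n\cap\overline{B_r(x)}\big)$ from connectedness. The naive estimate is treacherous: since $K_n$ is connected, contains a point converging to $x$, and for small $r$ and large $n$ is not contained in $\overline{B_r(x)}$, the $1$-Lipschitz function $d(\cdot,x)$ maps $K_n\cap\overline{B_r(x)}$ onto an interval of length close to $r$, giving only $\HH^1\big(K_n\cap\overline{B_r(x)}\big)\ge r-o(1)$; this single ``crossing'' yields density $\tfrac12$ and is off by a factor two. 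The resolution is that at $\HH^1$-almost every $x\in K$ the continuum passes through $x$ from two sides, forcing $K_n$ to meet $\partial B_s(x)$ at least twice for most $s<r$, so that via the Eilenberg coarea inequality $\int_0^r\#\big(K_n\cap\partial B_s(x)\big)\,ds\le\HH^1\big(K_n\cap\overline{B_r(x)}\big)$ one recovers the full lower bound $\ge 2r-o(1)$, the endpoint-type points where this fails being $\HH^1$-negligible. This sharp crossing/density estimate, which is the genuine content of the classical argument in \cite{AO,amti04}, is the one obstacle separating the setup above from the conclusion.
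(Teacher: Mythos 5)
Your proposal reproduces the standard framework for Go\l ab's theorem from the very references the paper points to (the paper itself gives no proof: it quotes the result as classical, citing \cite{AO} and \cite{amti04}). The reduction to the connected case is correct, modulo one small fix: the padding singletons must be chosen \emph{inside} $K_n$ (e.g.\ repeating a point of an existing component), since padding with a fixed external point makes $\bigcup_i C_n^i\neq K_n$ and pollutes the Hausdorff limit. The weak-$*$ compactness setup and the appeal to the density comparison theorem are also the right skeleton (though note the comparison theorem only requires the \emph{upper} density $\limsup_{r\to0^+}\mu(\overline{B_r(x)})/(2r)\ge1$, so you should aim at the $\limsup$, which is weaker than the $\liminf$ bound you wrote). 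But the proposal stops exactly where the theorem begins: the sharp density estimate is the entire mathematical content of Go\l ab's theorem, and you say yourself that it is ``the one obstacle separating the setup above from the conclusion.'' A proof with that step missing is not a proof; what you have is a correct reduction of the statement to its hardest ingredient.

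Moreover, the one-sentence strategy you offer for that ingredient does not close as stated, for two reasons. First, the claim that ``at $\HH^1$-a.e.\ $x\in K$ the continuum passes through $x$ from two sides'' presupposes structural information about $K$ (finite measure, rectifiability, a.e.\ non-endpoint behaviour) that is not yet available at this stage of the argument --- indeed $\HH^1(K)<\infty$ is part of what is being proved; the usual route is to first run the crude density-$1/2$ bound to get $\HH^1(K)\le 2L<\infty$ and only then bootstrap. Second, and more seriously, two-sidedness of the \emph{limit} $K$ at $x$ does not force the \emph{approximating} continua $K_n$ to meet $\partial B_s(x)$ twice for most $s<r$: Hausdorff convergence only guarantees that $K_n$ has points near every point of $K$, and a connected $K_n$ can realize this with junction (``Y''-shaped) configurations, or by joining the two sides of $x$ through an arc that exits the ball, so that many spheres $\partial B_s(x)$ are crossed only once and the Eilenberg bound degenerates back to the factor-$\tfrac12$ estimate. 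Excluding that such configurations can persist on a set of centers and radii of positive measure is precisely the delicate argument of the cited references --- Ambrosio--Tilli's analysis, or Alberti--Ottolini's parametrization of a finite-length continuum by a closed Lipschitz curve of length $2\HH^1$ --- and nothing in your sketch substitutes for it.
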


\begin{lemm}\label{lem.inradcon}
Let $(\O_n)$ be a sequence of equi-bounded open sets. If $\O_n\overset{H^c}{\to}\O$ we have also $\rho(\O_n)\to\rho(\O)$.
\end{lemm}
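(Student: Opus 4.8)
The goal is to prove that if $(\O_n)$ is a sequence of equi-bounded open sets with $\O_n\overset{H^c}{\to}\O$, then the inradii converge, $\rho(\O_n)\to\rho(\O)$. Recall that $d_{H^c}(\O_n,\O)=d_H(\O_n^c,\O^c)$, so the hypothesis says precisely that the closed complements $\O_n^c$ converge to $\O^c$ in the Hausdorff distance. The key observation is that the inradius admits a clean reformulation in terms of the complement: since $d(x,\pa\O)=d(x,\O^c)$ for $x\in\O$, one has
$$\rho(\O)=\sup_{x\in\O}d(x,\O^c)=\sup_{x\in\R^2}d(x,\O^c),$$
the last equality holding because points outside $\O$ contribute zero. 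Thus $\rho(\O)$ is exactly the supremum over the plane of the distance function to the closed set $\O^c$, and the whole problem reduces to the stability of $\sup_x d(x,C)$ under Hausdorff convergence of the closed set $C$.

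The plan is to exploit the standard estimate that Hausdorff convergence of closed sets controls the uniform distance between their distance functions. Concretely, if $d_H(C_1,C_2)\le\eps$ then for every $x$ one has $|d(x,C_1)-d(x,C_2)|\le\eps$. I would prove this elementary fact (or cite it as a basic property of Hausdorff distance from \cite{bubu05}) and apply it with $C_1=\O_n^c$ and $C_2=\O^c$. This gives $\big|\,d(x,\O_n^c)-d(x,\O^c)\,\big|\le d_H(\O_n^c,\O^c)=d_{H^c}(\O_n,\O)$ uniformly in $x$. Taking the supremum over $x\in\R^2$ on both sides and using the reformulation above yields
$$\big|\rho(\O_n)-\rho(\O)\big|\le d_{H^c}(\O_n,\O),$$
since the supremum of a function is $1$-Lipschitz with respect to the uniform norm. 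Letting $n\to\infty$ and using $d_{H^c}(\O_n,\O)\to 0$ finishes the proof.

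The one genuinely subtle point worth checking is that the suprema are attained on a fixed compact set, so that the passage to the supremum is legitimate and no issues arise at infinity. Because the sets are equi-bounded, all the complements $\O_n^c$ and $\O^c$ contain the complement of a fixed large ball, so $d(x,\O_n^c)$ and $d(x,\O^c)$ both vanish outside that ball; hence the relevant suprema are effectively taken over a fixed compact region, and everything is finite and well-behaved. I expect this boundedness bookkeeping, rather than the core estimate, to be the only place requiring care. In fact the argument shows more than claimed, namely the quantitative Lipschitz bound $|\rho(\O_n)-\rho(\O)|\le d_{H^c}(\O_n,\O)$, from which the asserted convergence is immediate.
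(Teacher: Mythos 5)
Your proof is correct, and it takes a genuinely different route from the paper's. The paper proves the two semicontinuity inequalities separately: for $\liminf_n\rho(\O_n)\ge\rho(\O)$ it uses the elementary property of $H^c$-convergence that any ball of radius $\rho(\O)-\eps$ whose closure lies in $\O$ is eventually contained in $\O_n$; for the upper bound it argues by contradiction, extracting a convergent subsequence of centers $x_{n_k}$ of inballs of radius $\rho(\O_{n_k})>\rho(\O)+\eps$ and producing a forbidden ball $B_{\rho(\O)+\eps}(x_\infty)\subseteq\O$, a step that uses equi-boundedness through compactness and subsequence extraction. You instead rewrite $\rho(\O)=\sup_{x\in\R^2}d(x,\O^c)$ (valid since $d(x,\pa\O)=d(x,\O^c)$ for $x$ in the open set $\O$, and the distance vanishes off $\O$) and invoke the standard fact that $\sup_{x}\big|d(x,C_1)-d(x,C_2)\big|\le d_H(C_1,C_2)$ for closed sets, so that the $1$-Lipschitz character of the supremum functional yields the quantitative bound $|\rho(\O_n)-\rho(\O)|\le d_{H^c}(\O_n,\O)$ in one stroke. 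Your approach buys brevity, no subsequence extraction, and a Lipschitz estimate strictly stronger than the stated convergence; equi-boundedness enters only to guarantee finiteness of the suprema (if all $\O_n\subset B_R$ then $\sup_x d(x,\O_n^c)\le 2R$, and the uniform estimate then makes $\sup_x d(x,\O^c)$ finite as well, so your ``bookkeeping'' worry is indeed harmless and needs only this one line). The paper's argument, by contrast, relies only on inclusion-type properties of $H^c$-convergence and never identifies the Hausdorff distance with the uniform distance between distance functions; that identification is standard and appears among the elementary properties in the references the paper cites (\cite{bubu05}, \cite{He}), so citing it precisely, as you suggest, would make your proof complete.
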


\begin{proof}
For simplicity we denote $\rho:=\rho(\O)$, and $\rho_n:=\rho(\O_n)$. First we show that \be\label{basso}\rho\le\liminf_n\rho_n.\ee Indeed, without loss of generality let us assume $\rho>0$. Then for any $0<\eps<\rho$, there exists a ball $B_\eps$ whose radius is $\rho-\eps$ and whose closure is contained in $\O$. By elementary properties of co-Hausdorff convergence, there exists $\nu$ such that $B_\eps\subset\O_n$, for $n>\nu$, which implies $\rho_n\ge\rho-\eps$. Since $\eps>0$ is arbitrary, we get \eqref{basso}.

In order to prove the upper semicontinuity, assume by contradiction that there exist $\eps>0$ and a subsequence $(n_k)$ such that $\rho_{n_k}>\rho+\eps$ for every $k\in \N$. Then there exists a sequence of balls $B_{n_k}=B_{ \rho_{n_k} }(x_{n_k})\subseteq\O_{n_k}$. Eventually passing to a subsequence, the sequence $(x_{n_k})$ converges to a point $x_{\infty}$ and the sequence of the translated open set $\O_{n_k}-x_{n_k}$ converges to $\O-x_{\infty}$. Since $B_{r}(0)\subseteq \O_{n_k}-x_{n_k} $ for $r=\rho+\eps$, it turns out that $B_{r}(0)\subseteq \O-x_{\infty}$, i.e. $B_{r}(x_{\infty})\subseteq\O$ which leads to a contradiction.
\end{proof}

\begin{lemm}\label{lem.top2}
Let $\O$ be a connected bounded open set of $\R^n$. There exists a sequence of connected bounded open sets $(\O_n)$ such that $\overline\O_n\subset\O_{n+1}$ and $\bigcup_n\O_n=\O$.
\end{lemm}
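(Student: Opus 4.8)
The plan is to begin from the standard exhaustion of $\O$ by compactly contained open sets and then repair the possible loss of connectedness by passing, at each stage, to the connected component selected by a fixed base point.

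First I would fix a base point $x_0\in\O$ and, for $n\in\N$, set
$$U_n:=\big\{x\in\O\ :\ d(x,\pa\O)>1/n\big\}.$$
These sets are open, increasing, and bounded (since $\O$ is bounded), and clearly $\bigcup_n U_n=\O$, because every $y\in\O$ satisfies $d(y,\pa\O)>0$ and hence lies in $U_n$ for $n$ large. Moreover one has the elementary inclusion $\overline{U_n}\subset U_{n+1}$, since $\overline{U_n}\subset\{x:d(x,\pa\O)\ge1/n\}\subset U_{n+1}$, and the middle set is compact. Discarding finitely many indices I may assume $x_0\in U_n$ for every $n$, and I then define $\O_n$ to be the connected component of $U_n$ containing $x_0$. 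As $\R^n$ is locally connected, each $\O_n$ is open; it is connected and bounded by construction.

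Next I would verify the nesting $\overline{\O_n}\subset\O_{n+1}$. Since $\O_n$ is connected, contains $x_0$, and satisfies $\O_n\subset U_n\subset U_{n+1}$, it must be contained in the component of $U_{n+1}$ through $x_0$, so $\O_n\subset\O_{n+1}$. For the stronger inclusion I use that the closure of a connected set is connected: the set $\overline{\O_n}$ is connected, contains $x_0$, and lies in $\overline{U_n}\subset U_{n+1}$, hence it is contained in the component $\O_{n+1}$.

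It remains to show $\bigcup_n\O_n=\O$, and here I would exploit that a connected open subset of $\R^n$ is path-connected. Given $y\in\O$, choose a path $\gamma$ in $\O$ joining $x_0$ to $y$; its image is compact and therefore contained in $U_n$ for $n$ large enough. Being connected and containing $x_0$, this image lies in $\O_n$, whence $y\in\O_n$. The only genuinely delicate point is the preservation of connectedness through the exhaustion: everything hinges on combining $\overline{U_n}\subset U_{n+1}$ with the stability of connectedness under closure, so that the base-point components genuinely nest with room to spare. After a harmless reindexing the sequence $(\O_n)$ enjoys all the required properties.
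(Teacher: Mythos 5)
Your proof is correct, and it takes a genuinely different route from the paper's. You fix a base point $x_0$ and take $\O_n$ to be the $x_0$-component of the standard exhaustion $U_n=\{x\in\O:\ d(x,\pa\O)>1/n\}$: connectedness of each $\O_n$ is then automatic (components of open subsets of the locally connected space $\R^n$ are open), the nesting $\overline{\O_n}\subset\O_{n+1}$ follows from $\overline{U_n}\subset U_{n+1}$ together with the fact that the closure of a connected set is connected, and the exhaustion $\bigcup_n\O_n=\O$ follows from path-connectedness of the open connected set $\O$, since the compact image of a path from $x_0$ to any $y\in\O$ lies in some $U_n$ and hence in $\O_n$. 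The paper instead argues by induction on scales: at each stage it keeps all connected components of the interior parallel set $\O(\nu^{-1})$ whose Lebesgue measure exceeds a threshold, joins their closures by finitely many arcs (this is where the paper uses path-connectedness of $\O$), and thickens the resulting compact connected set into an open connected set compactly contained in $\O$. So both arguments hinge on path-connectedness, but you use it only to verify the union, whereas the paper uses it to manufacture connectedness of the approximating sets; your version is shorter, avoids the measure threshold and the arc-joining construction, and is manifestly dimension-free, which fits the statement being posed in $\R^n$ (the paper's threshold $\pi\nu^{-2}$ is written for the planar case).

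One sentence of yours should be tightened: the chain $\overline{U_n}\subset\{x:\ d(x,\pa\O)\ge 1/n\}\subset U_{n+1}$ is false as literally written, because $\{x:\ d(x,\pa\O)\ge 1/n\}$ also contains points far outside $\O$ (and is unbounded, hence not compact). The correct intermediate set is $\{x\in\overline\O:\ d(x,\pa\O)\ge 1/n\}$; this suffices, since $\overline{U_n}\subset\overline\O$ and any point of $\overline\O$ at positive distance from $\pa\O$ must lie in $\O$. This is a harmless slip, not a gap in the argument.
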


\begin{proof}
We construct the sequence by induction. First of all we notice that there exists an integer $\nu_1>0$ such that $\O(\nu_1^{-1})$ contains at least one connected component of $\O$ with Lebesgue measure greater than $\pi\nu_1^{-2}$. Indeed it suffices to choose
$$\nu_1^{-1}\le\min\{ d(y,\pa\O)\ :\ y\in\pa B_r(x)\}\wedge r$$
where $B_r(x)$ is any ball with closure contained in $\O$. Now let $M$ be the number of connected components of $\O(\nu_1^{-1})$ with Lebesgue measure greater than $\pi\nu_1^{-2}$. If $M=1$ we define $ \O_{1}:=\O(\nu_1^{-1})$. Otherwise, since $\O$ is pathwise connected, we can connect the closures of the $M$ connected components with finitely many arcs to define a connected compact set $K\subset\O$. Then, we choose $m$ such that $m>\nu_1$ and $m^{-1}<\inf\{d(x,\pa\O) : x\in K\}$ and we set 
$$\O_{1}:=\{ x\in \O: \ d(x,K)< (2m)^{-1} \}.$$
In both cases $\O_1$ is a connected open set which contains all the connected components of $\O(\nu_1^{-1})$ having Lebesgue measure greater then $\pi\nu_1^{-2}$. Moreover by construction there exists $\nu_2>\nu_1$ such that $\overline \O_1\subseteq \O(\nu^{-1}_2)$.
Replacing $\nu_1$ with $\nu_2$ we can use the previous argument to define $\O_2$ such that $\overline\O_1\subset \O_2$. 
Iterating this argument we eventually define an increasing sequence $\nu_n$ and a sequence of connected open sets $(\O_n)$ such that
$\overline \O_n\subset\O_{n+1}\subset\O$ and $\O_n$ contains all the connected components of $\O(\nu_n^{-1})$ of Lebesgue measure greater than $\pi\nu_{n}^{-2}$.
Since for any $x\in\O$ there exists $r>0$ such that $\overline{B}_r(x)\subset\O$, choosing $\nu_n^{-1}\le \min\{ d(y,\pa\O)\ : y\in\pa B_r(x)\}\wedge r$,
it is easy to show that $x\in\O_n$. Thus $\bigcup_{n} \O_{n}=\O$. 
\end{proof}
In the following lemma we establish  a Bonnesen-type inequality for  sets  $\Omega\in \A_k$ satisfying $\HH^1(\pa \O)<\infty$  (see Theorem 2 in  \cite{Oss79}  when    $\O$ is a simply connected  plane domain bounded by a rectifiable Jordan curve).
 \begin{lemm}\label{lem.coarea2}
Let $\O\in\A_k$ with $\HH^1(\pa \O)<\infty$. 
Then \be\label{eq.coarea}
|\O| \le [2\HH^1(\pa \O)-P(\Omega)+\pi ( k-1) \rho(\O)]\rho(\O).
\ee 
\end{lemm}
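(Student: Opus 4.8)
The plan is to prove \eqref{eq.coarea} first under the extra assumption $\#\O<\infty$, and then to remove it by approximating $\O$ with sets having only finitely many connected components.

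\emph{Finite case.} When $\#\O<\infty$ the set $\O$ satisfies the hypotheses of Theorem \ref{theo.Nareg}, so I would invoke \eqref{eq.boundA}, valid for a.e. $t\in(0,\rho(\O))$. Since $A(t)=|\O(t)|$ and $\O(t)$ increases to $\O$ as $t\downarrow 0$, the function $A$ is continuous up to $t=0$ with $A(0)=|\O|$ (consistently with \eqref{eq.Evans}); as the right-hand side of \eqref{eq.boundA} is a polynomial in $(\rho(\O)-t)$, letting $t\to 0^+$ gives
$$|\O|\le\big(P(\O)+2\HH^1(\pa\O\cap\O^{(1)})\big)\rho(\O)+\pi(k-1)\rho(\O)^2.$$
Inserting the elementary bound \eqref{eq.decomp2}, namely $P(\O)+2\HH^1(\pa\O\cap\O^{(1)})\le 2\HH^1(\pa\O)-P(\O)$, then yields \eqref{eq.coarea} in this case.

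\emph{General case.} For an arbitrary $\O\in\A_k$ with $\HH^1(\pa\O)<\infty$, I would use the exhaustion of Lemma \ref{lem.top1}: writing $(\O^i)$ for the connected components of $\O$ and $\O_n:=\bigcup_{i=1}^n\O^i$, each $\O_n$ has finitely many components, satisfies $\O_n\in\A_k$ (exactly as noted in the proof of Theorem \ref{theo.approxim}), and obeys $\HH^1(\pa\O_n)\le\HH^1(\pa\O)<\infty$ by Lemma \ref{lem.top1}(i). The finite case applied to $\O_n$ gives
$$|\O_n|\le\big[2\HH^1(\pa\O_n)-P(\O_n)+\pi(k-1)\rho(\O_n)\big]\rho(\O_n)=:R_n\,\rho(\O_n).$$
Now I would pass to the limit using $|\O_n|\uparrow|\O|$, $\HH^1(\pa\O_n)\le\HH^1(\pa\O)$, $\liminf_n P(\O_n)\ge P(\O)$ (Lemma \ref{lem.top1}(ii)), and $\rho(\O_n)\to\rho(\O)$ with $\rho(\O_n)\le\rho(\O)$ (monotonicity of the inradius together with an inscribed-ball argument, or Lemma \ref{lem.inradcon}). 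These give $\limsup_n R_n\le 2\HH^1(\pa\O)-P(\O)+\pi(k-1)\rho(\O)=:R$; moreover $R_n\ge0$ for large $n$ because $R_n\rho(\O_n)\ge|\O_n|\ge0$ and $\rho(\O_n)>0$, whence $R_n\rho(\O_n)\le R_n\rho(\O)$. Taking the $\limsup$ in the displayed inequality I would conclude $|\O|\le R\,\rho(\O)$, which is exactly \eqref{eq.coarea}.

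\emph{Main obstacle.} The delicate point is the final limit passage rather than the finite case. The coefficient $\pi(k-1)$ is negative when $k=0$, so one cannot pass to the limit termwise using only $\rho(\O_n)\le\rho(\O)$; the convergence $\rho(\O_n)\to\rho(\O)$ is genuinely needed to control the term $\pi(k-1)\rho(\O_n)$, and the product $R_n\,\rho(\O_n)$ must be handled through the eventual nonnegativity of $R_n$. The two facts that make the argument close are that the approximants $\O_n$ remain in $\A_k$ with the \emph{same} $k$ (so the constant in the inequality is not worsened) and that the perimeter is lower semicontinuous in precisely the direction required, so that $-P(\O_n)$ passes correctly to the $\limsup$.
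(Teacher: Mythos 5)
Your proof is correct and follows essentially the same route as the paper's: the finite case via Theorem \ref{theo.Nareg} together with \eqref{eq.decomp2}, and the general case by exhausting $\O$ with the finite unions $\O_n$ of Lemma \ref{lem.top1} and passing to the limit using Lemma \ref{lem.inradcon} and the lower semicontinuity of the perimeter. Your extra care about the sign of $\pi(k-1)$ and about the product $R_n\,\rho(\O_n)$ simply makes explicit a limit passage that the paper carries out more tersely.
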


\begin{proof}
If $\#\O<\infty$, by Theorem \ref{theo.Nareg} and \eqref{eq.Evans},
$$|\O|\le \left(P(\O)+2\HH^1(\pa\O \cap \O^{(1)})+\pi(k-1)\rho(\O)\right)\rho(\O),$$
and we conclude by \eqref{eq.decomp2}. To prove the general case we denote by $(\O^i)$ the connected components of $\O$ and we set $\O_n:=\bigcup_{i=1}^{n}\O^i$. By the previous step we have
$$|\O_n|\le \big(2\HH^1(\pa \O_n)-P(\Omega_n)+\pi (k-1) \rho(\O_n)\big)\rho(\O_n).$$
Since $\O_n\overset{H^c}{\to}\O$ and $\O_n\to\O$ in the $L^1$-convergence, taking into account Lemma \ref{lem.inradcon} and Lemma \ref{lem.top1}, we can conclude that 
\begin{align*}
|\O|=\lim_{n\to\infty}|\O_n|&\le\big(2\HH^1(\pa\O)-\limsup_n P(\O_n)+\pi(\alpha-1)\rho(\O)\big)\rho(\O)\\
&\le\big(2 \HH^1(\pa\O)-P(\O)+\pi (k-1)\rho(\O)\big)\rho(\O),
\end{align*}
from which the thesis is achieved.
\end{proof}

\begin{theo}\label{theo.convmeas}
Let $(\O_n)\subset\A_k$ be a sequence of equi-bounded open sets with 
$$\sup_n\HH^1(\pa\O_n)<\infty.$$ 
If $\O_n\overset{H^c}{\to}\O$ then $\O\in \A_k$ and $\O_n\to \O$ in the $L^1$-convergence.
If, in addition, either $\sup_n\#\pa\O_n< \infty$ or $\#\O<\infty$ then 
\be
\label{eq.golab}
\HH^1(\pa\O)\le \liminf_n \HH^1(\pa\O_n).
\ee
\end{theo}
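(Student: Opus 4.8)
The plan is to prove the three assertions in turn: the membership $\O\in\A_k$, the $L^1$-convergence $|\O_n\triangle\O|\to0$, and finally the Go\l ab-type inequality \eqref{eq.golab}. For the first assertion I would simply invoke the stability of $\A_k$ under equi-bounded co-Hausdorff limits recalled in Section \ref{shau} (Remark 2.2.20 of \cite{He}). Next, fix a ball $B$ containing all the $\O_n$ and set $\eps_n:=d_{H^c}(\O_n,\O)\to0$. Unwinding the definition of $d_H$ gives the two one-sided inclusions
\be\label{pl.incl}
\O(\eps_n)\subseteq\O_n\qquad\text{and}\qquad\O_n(\eps_n)\subseteq\O,
\ee
since $d(x,\O^c)>\eps_n$ forces $x\in\O_n$ and, symmetrically, $d(x,\O_n^c)>\eps_n$ forces $x\in\O$. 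Writing $|\O_n\triangle\O|=|\O\setminus\O_n|+|\O_n\setminus\O|$, the first inclusion yields $|\O\setminus\O_n|\le|\O|-|\O(\eps_n)|\to0$ (this is the lower semicontinuity $|\O|\le\liminf_n|\O_n|$ that holds in general), while the second, via \eqref{eq.Evans}, yields $|\O_n\setminus\O|\le|\O_n|-|\O_n(\eps_n)|=\int_0^{\eps_n}L_n(t)\,dt$, with $L_n$ the interior-parallel length of $\O_n$. Everything thus reduces to showing that the inner boundary layers have vanishing measure, i.e. $\int_0^{\eps_n}L_n(t)\,dt\to0$.

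To establish this I would exploit Sz.~Nagy's theorem. Setting $\alpha_n:=\#\O_n^c\le k$, Theorem \ref{theo.Na} gives that $t\mapsto-|\O_n(t)|-(\alpha_n-1)\pi t^2$ is concave, so its right derivative $g_n(t)=L_n(t)-2\pi(\alpha_n-1)t$ is nonincreasing; hence $\sup_{(0,\rho(\O_n))}g_n=\lim_{t\to0^+}g_n(t)$ and
\be\label{pl.bd}
\int_0^{\eps_n}L_n(t)\,dt=\int_0^{\eps_n}g_n(t)\,dt+\pi(\alpha_n-1)\eps_n^2\le\eps_n\sup_{(0,\rho(\O_n))}g_n+\pi(k-1)\eps_n^2.
\ee
The crucial point is a bound on $\sup g_n$ uniform in $n$. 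By the Minkowski-content computation of Corollary \ref{coro.Mink} one has $\sup g_n=P(\O_n)+2\HH^1(\pa\O_n\cap\O_n^{(1)})$, and by \eqref{eq.decomp} this is at most $2\HH^1(\pa\O_n)\le2\sup_n\HH^1(\pa\O_n)<\infty$. Inserting this into \eqref{pl.bd} and using $\eps_n\to0$ gives $\int_0^{\eps_n}L_n\to0$, hence $\O_n\to\O$ in $L^1$.

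I expect this uniform bound to be the main obstacle. Indeed Corollary \ref{coro.Mink} is stated for boundaries with finitely many connected components, whereas here $\O_n$ may have infinitely many components. I would circumvent this by noting that $\HH^1(\pa\O_n)<\infty$ together with $\#\O_n^c\le k$ already forces $\pa\O_n$ to be $1$-rectifiable (each complementary component has boundary whose nondegenerate connected components are continua of finite length, hence rectifiable, and there are only countably many of them), so that the identity \eqref{eq.g} underlying Corollary \ref{coro.Mink} remains valid; alternatively one reduces to the finite-component case through the truncations $\O_n^{(N)}=\bigcup_{i\le N}\O_n^i$ and the monotonicity estimates of Lemma \ref{lem.top1}.

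For the Go\l ab-type inequality \eqref{eq.golab} I would argue by Hausdorff compactness of boundaries. Passing to a subsequence realising $\liminf_n\HH^1(\pa\O_n)$ with $\HH^1(\pa\O_n)$ bounded, the compact sets $\pa\O_n\subseteq\overline B$ admit, by Blaschke's selection theorem, a further subsequence with $\pa\O_n\overset{H}{\to}\Gamma$ for some compact $\Gamma$. I would then check that $\pa\O\subseteq\Gamma$: if $x\in\pa\O$, then $x\in\O^c$ produces, via $\O_n^c\overset{H}{\to}\O^c$, points of $\O_n^c$ converging to $x$, while $x\in\overline\O$ together with $\O(\eps_n)\subseteq\O_n$ produces points of $\O_n$ arbitrarily close to $x$; since a ball meeting both $\O_n$ and $\O_n^c$ meets $\pa\O_n$, one gets $d(x,\pa\O_n)\to0$, i.e. $x\in\Gamma$. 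Go\l ab's Theorem \ref{theo.Golab} then gives $\HH^1(\Gamma)\le\liminf_n\HH^1(\pa\O_n)$, whence $\HH^1(\pa\O)\le\HH^1(\Gamma)\le\liminf_n\HH^1(\pa\O_n)$. The two extra hypotheses serve precisely to place $(\pa\O_n)$ in the class $\mathcal{C}_{k'}$ of Theorem \ref{theo.Golab}, i.e. to guarantee a uniform bound $\#\pa\O_n\le k'$: this is immediate when $\sup_n\#\pa\O_n<\infty$, and in the case $\#\O<\infty$ it is recovered by combining the $L^1$-convergence just proved with a componentwise analysis of $\pa\O$ and $\pa\O_n$.
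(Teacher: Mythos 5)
Your reduction of the $L^1$-convergence to the collar estimate $\int_0^{\eps_n}L_n(t)\,dt\to0$, via the inclusions $\O(\eps_n)\subseteq\O_n$ and $\O_n(\eps_n)\subseteq\O$, is correct and is a genuinely different (and more quantitative) route than the paper's, which instead treats first the case $\sup_n\#\pa\O_n<\infty$ by Hausdorff compactness of the boundaries plus Go\l ab, and then the general case by inner approximation with finitely many components, controlling the neglected components through the Bonnesen-type Lemma \ref{lem.coarea2} and the inradius. However, your argument has a genuine gap exactly where you yourself locate it: the uniform bound $\sup g_n\le2\HH^1(\pa\O_n)$ rests on the identity \eqref{eq.g}, i.e.\ on Corollary \ref{coro.Mink}, which is available only when $\pa\O_n$ has finitely many connected components, while here the $\O_n$ may have infinitely many. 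Your first fix does not close this gap: the Minkowski-content identities \eqref{eq.Amb} and \eqref{eq.Villa} are proved for compact sets that are Lipschitz images of compact subsets of $\R$ (in practice, finite unions of rectifiable continua), Minkowski content is not countably subadditive, and nothing excludes that part of $\pa\O_n$ consists of uncountably many singleton components, which escape your ``countably many rectifiable continua'' argument altogether. Your second fix (truncation) can be made to work, but the decisive observation is missing from your sketch: writing $\O_{n,N}$ for the union of the first $N$ connected components of $\O_n$, one has $d(x,\pa\O_n)=d(x,\pa\O_{n,N})$ for every $x\in\O_{n,N}$, because the ball $B_{d(x,\pa\O_n)}(x)$ is connected, disjoint from $\pa\O_n$, hence contained in the component of $x$; consequently $\O_{n,N}\cap\big(\O_n\setminus\O_n(\eps)\big)=\O_{n,N}\setminus\O_{n,N}(\eps)$, and applying \eqref{eq.boundL}, \eqref{eq.decomp2} and Lemma \ref{lem.top1} to $\O_{n,N}\in\A_k$ and letting $N\to\infty$ gives $|\O_n\setminus\O_n(\eps)|\le2\HH^1(\pa\O_n)\eps+\pi(k-1)\eps^2$ uniformly in $n$. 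Without this identity the truncation is not obviously legitimate, since boundaries of different components may overlap and the per-component collar estimates cannot simply be summed.

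The second, more serious gap is in the Go\l ab inequality \eqref{eq.golab} in the case $\#\O<\infty$. Your argument covers only the case $\sup_n\#\pa\O_n<\infty$ (where it coincides with the paper's); but $\#\O<\infty$ gives no bound whatsoever on $\#\pa\O_n$, so there is no class $\mathcal{C}_{k'}$ in which Theorem \ref{theo.Golab} can be applied to the boundaries $\pa\O_n$ or to their Hausdorff limit $\Gamma$, and the announced ``componentwise analysis'' is not a proof. Note that applying Go\l ab to $\Gamma$ is genuinely impossible without a component bound: the boundaries of unions of many tiny disks Hausdorff-converge to a segment while their total lengths tend to zero, so lower semicontinuity along $(\pa\O_n)$ simply fails in this generality. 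This is precisely the point where the paper needs its inner-approximation machinery: Lemma \ref{lem.top2} produces connected compact subsets exhausting the finitely many components of $\O$, for large $l$ these sit inside finitely many selected components $\widetilde\O_l$ of $\O_l$, and Go\l ab is applied to $\pa\widetilde\O_l$, which has a uniformly bounded number of components and, by Lemma \ref{lem.top1}, $\HH^1(\pa\widetilde\O_l)\le\HH^1(\pa\O_l)$. Some substitute for this construction is indispensable in your proof as well; as it stands, the case $\#\O<\infty$, $\sup_n\#\pa\O_n=\infty$ of \eqref{eq.golab} is asserted rather than proved.
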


\begin{proof} 
We first deal with the case when $\sup_{n}\#\pa\O_n<\infty$, already considered in \cite{ChDo} and \cite{bubu05}. By compactness we can suppose that $\pa\O_n$ converges to some nonempty compact set $K$ which contains $\pa\O$. Then it is easy to show that $\bar\O_n\overset{H}{\to}\O\cup K$, which implies $\chi_{\O_n}\to\chi_\O$ pointwise in $\R^2\setminus K$, where $\chi_E$ denotes the characteristic function of a set $E$. By Theorem \ref{theo.Golab} we have also
\be\label{eq.golabK}
\HH^1(\partial \O)\le\HH^1(K)\le\liminf_{n\to\infty}\HH^1(\partial \O_n)<+\infty,
\ee
which implies \eqref{eq.golab}. In particular, we have $|K|=0$, and $\O_n\to \O$ in the $L^1$ convergence.

We consider now the general case. Let $(\O^i)$ be the connected components of $\O$ and $\eps>0$. There exists an integer $\nu(\eps)$ such that
$$|\O|-\eps<|\bigcup_{i=1}^{\nu(\eps)}\O^i|\le |\O|$$
(when $\#\O<\infty$ we simply choose $\nu(\eps)=\#\O$).
For each $i\le \nu(\eps)$, and for each set $\O^i$, we consider the sequence $(\O^i_n)$ given by Lemma \ref{lem.top2}. By elementary properties of co-Hausdorff convergence there exists $l:=l(n)$ such that
$$\bigcup_{i}^{\nu(\eps)}\overline{\O^i_n}\subset\O_{l}.$$
Let's denote by $\widetilde\O^i_{l}$ the connected component of $\O_{l}$ which contains $\overline{\O^i_n}$ (eventually $\widetilde{\O}^h_{l}=\widetilde{\O}^s_{l}$), and define
$$\widetilde\O_{l}:=\bigcup_{i=1}^{\nu(\eps)} \widetilde\O^i_{l}.$$
By compactness, passing eventually to a subsequence, there exists $\widetilde{\O}\in\A_k$ such that $\widetilde\O_{l}\overset{H^c}{\to}\widetilde\O$.
Moreover, since $\widetilde\O_l\in\A_k$, $\sup_l\#\widetilde\O_l\le\nu(\eps)$, and by Lemma \ref{lem.top1} we have
$$\sup_l \HH^1( \pa \widetilde \O_{l})\le\sup_l \HH^1(\pa\O_{l})<\infty,$$
we can apply the first part of the proof to conclude that $\widetilde\O_l\to \widetilde\O$ in the $L^1$-convergence. If $\#\O<\infty$ an easy argument shows that $\widetilde\O$ must be equal to $\O$ and that \eqref{eq.golabK} holds with $K$ the Hausdorff limit of $(\pa\widetilde\O_l)$. 
In particular \eqref{eq.golab} holds.
Otherwise we consider the set $\O^R_l$ of those connected components of $\O_{l}$ that have been neglected in the definition of $\widetilde\O_l$, that is
$$\O^R_{l}:=\O_{l}\setminus\widetilde\O_{l}.$$ 
Passing to a subsequence we can suppose that $\O^R_l\overset{H^c}{\to}\O^R$, for some open set $\O^R\in\A_k$. Moreover since $|\widetilde\O|>|\O|-\eps$, $\O^R\cap\tilde\O=\emptyset$ and $\O^R\subset\O$ we have also $|\O^R|\le\eps$. This implies $\rho(\O^R)\le\sqrt{\pi^{-1}\eps}$ and by Lemma \ref{lem.inradcon},
$$\lim_{l\to\infty}\rho(\O^R_{l})\le \sqrt{\pi^{-1}\eps}.$$ 
Finally, by Lemma \ref{lem.coarea2}, we have
\begin{align*}
|\O|&\le\liminf_{n\to\infty}|\O_{n}|\le \limsup_{l\to\infty} ( |\widetilde\O_{l}|+|\O^R_{l}|)= |\widetilde \O|+\limsup_{l\to\infty}|\O^R_{l}|\le |\O|+o(\eps).
\end{align*}
Since $\eps$ was arbitrary this shows that
$$
\liminf_{n\to\infty}|\O_n|=|\O|,
$$
and the thesis is easily achieved.
\end{proof} 

As an application of the previous theorem we prove the following fact.

\begin{coro}
Let $\O\in\A_k$ with $\HH^1(\pa\O)<\infty$ and $\#\O<\infty$. Then it holds
$$\HH^1(\pa\O\cap\O^{(0)})\le\HH^1(\pa\O\cap\O^{(1)}).$$
\end{coro}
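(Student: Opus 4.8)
The plan is to combine the interior Lipschitz approximation of Theorem~\ref{theo.approxim} with the Go\l ab-type lower semicontinuity of Theorem~\ref{theo.convmeas}, and then to read the outcome against the density decomposition \eqref{eq.decomp}. The guiding idea is that a Lipschitz set has no boundary of density $0$ or $1$, so along the approximating sequence the boundary measure and the perimeter coincide; passing to the limit then forces the two exceptional boundary pieces of $\O$ to balance in exactly the asserted direction.

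First I would invoke Theorem~\ref{theo.approxim} (applicable since $\HH^1(\pa\O)<\infty$ makes $\O$ a set of finite perimeter) to produce an increasing sequence $(A_n)\subset\A_k$ of Lipschitz sets with $\overline A_n\subset\O$ and $\bigcup_n A_n=\O$. Because each $A_n$ has Lipschitz boundary, every point of $\pa A_n$ has density $1/2$, so $\pa A_n\cap A_n^{(0)}$ and $\pa A_n\cap A_n^{(1)}$ are $\HH^1$-negligible and $\HH^1(\pa A_n)=P(A_n)$. Moreover $(A_n)$ is equi-bounded and converges to $\O$ in $L^1$, and since $\{x\in\O:\ d(x,\pa\O)\ge\eps\}$ is compact and hence eventually contained in $A_n$, one also has $A_n\overset{H^c}{\to}\O$; by (iv) of Theorem~\ref{theo.approxim} the perimeters are bounded, $\sup_n\HH^1(\pa A_n)=\sup_n P(A_n)<\infty$.

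Next, since $\#\O<\infty$, I would apply the lower semicontinuity \eqref{eq.golab} of Theorem~\ref{theo.convmeas} to the sequence $(A_n)$, obtaining $\HH^1(\pa\O)\le\liminf_n\HH^1(\pa A_n)=\liminf_n P(A_n)$. On the other hand, the additional assertion of Theorem~\ref{theo.approxim} (again invoking $\#\O<\infty$) computes this limit exactly as $\lim_n P(A_n)=P(\O)+2\HH^1(\pa\O\cap\O^{(1)})$. Combining the two estimates gives $\HH^1(\pa\O)\le P(\O)+2\HH^1(\pa\O\cap\O^{(1)})$. Finally I would substitute the decomposition \eqref{eq.decomp}, namely $\HH^1(\pa\O)=\HH^1(\pa\O\cap\O^{(0)})+\HH^1(\pa\O\cap\O^{(1)})+P(\O)$ (legitimate because $\#\O<\infty$ yields $\#\pa\O<\infty$ and finite perimeter), and cancel the common terms $P(\O)$ and $\HH^1(\pa\O\cap\O^{(1)})$ to reach the claim $\HH^1(\pa\O\cap\O^{(0)})\le\HH^1(\pa\O\cap\O^{(1)})$.

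The only delicate point is the bookkeeping that both limiting statements are applied to one and the same sequence with their hypotheses simultaneously satisfied: the interior nature $\overline A_n\subset\O$ is precisely what makes the perimeter identity of Theorem~\ref{theo.approxim} available, while the co-Hausdorff convergence $A_n\overset{H^c}{\to}\O$ together with $\#\O<\infty$ is what activates the Go\l ab-type bound of Theorem~\ref{theo.convmeas}. Once these compatibility conditions are checked, the remaining manipulation is purely algebraic, so I do not anticipate any substantial obstacle beyond verifying that the interior approximation indeed converges in the co-Hausdorff sense.
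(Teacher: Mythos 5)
Your proposal is correct and follows essentially the same route as the paper's proof: both construct the interior Lipschitz approximation $(A_n)$ via Theorem \ref{theo.approxim} (with $P(A_n)\to P(\O)+2\HH^1(\pa\O\cap\O^{(1)})$, using $\#\O<\infty$), apply the Go\l ab-type lower bound of Theorem \ref{theo.convmeas} to get $\HH^1(\pa\O)\le\lim_n P(A_n)$, and conclude by cancellation in the decomposition \eqref{eq.decomp}. The only difference is that you spell out details the paper leaves implicit, such as the verification that $A_n\overset{H^c}{\to}\O$ and the compatibility of the hypotheses of the two theorems.
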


\begin{proof}
By Theorem \ref{theo.approxim} we can consider a sequence $(A_n)\in\A_k$ of Lipschitz sets such that $A_n\overset{H^c}{\to}\O$ and $P(A_n)\to P(\O)+2\HH^1(\pa\O\cap \O^{(1)})<\infty$. Then, by Theorem \ref{theo.convmeas}, we conclude
$$\HH^1(\pa\O)\le \lim_{n\to\infty} P(A_n)\le P(\O)+2\HH^1(\pa\O\cap \O^{(1)}),$$
which easily implies the thesis, using \eqref{eq.decomp}.
\end{proof}

\begin{rem} We remark the fact that the inequality 
$$\lim_{n\to\infty}P(A_n)\ge\HH^1(\pa \O)$$
is not in general satisfied when $\#\O=\infty$, see also Remark \ref{ex.ce}.
\end{rem}

%%%%%%%%%%%%%%%%%%%%%%%%%%%%%%%%%%%%%%%%%%%%%%%%%%
%%%%%%%%%%%%%%%%%%%%%%%%%%%%%%%%%%%%%%%%%%%%%%%%%%
\section{Existence of relaxed solutions}\label{sexis}

Our next result generalizes the estimate 
$F_{1/2}(\O)\ge3^{-1/2}$, proved in \cite{polya60} for the class $\A_{convex}$, to the class $\A_k$. 

\begin{theo}\label{theo.Polya}
For every $\O\in\A_k$ set of finite perimeter we have
\be\label{eq.polro}
\frac{T^{1/2}(\O)}{|\O|^{3/2}}\ge\frac{3^{-1/2}}{\left(2\HH^1(\pa\O)-P(\O)+2\pi(k-1)\rho(\O)\right)}. 
\ee
\end{theo}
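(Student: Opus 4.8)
The plan is to combine the variational characterization \eqref{vartor} of the torsional rigidity with the interior parallel method of Section \ref{sapp}, so as to reduce \eqref{eq.polro} to a one--dimensional inequality for the profile $t\mapsto A(t)=|\O(t)|$. First I would dispose of the trivial case $\HH^1(\pa\O)=+\infty$, in which the right--hand side of \eqref{eq.polro} vanishes, and then reduce to the case $\#\O<\infty$, which is needed in order to invoke Theorem \ref{theo.Nareg}. This reduction proceeds exactly along the lines of Lemma \ref{lem.coarea2}: writing $\O_n:=\bigcup_{i=1}^n\O^i$ for the partial unions of the connected components of $\O$, one has $T(\O_n)\to T(\O)$ and $|\O_n|\to|\O|$ by additivity and monotonicity of $T$, while $\rho(\O_n)\to\rho(\O)$ by Lemma \ref{lem.inradcon}, $\HH^1(\pa\O_n)\le\HH^1(\pa\O)$ and $P(\O)\le\liminf_nP(\O_n)$ by Lemma \ref{lem.top1}; hence the denominator of \eqref{eq.polro} evaluated at $\O_n$ has $\limsup$ bounded above by the one at $\O$, and \eqref{eq.polro} passes to the limit.

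Assume then $\#\O<\infty$ and $\HH^1(\pa\O)<\infty$, and set $\rho:=\rho(\O)$. The core step is the lower bound
$$T(\O)\ge\int_0^{\rho}\frac{A(t)^2}{L(t)}\,dt.$$
To prove it I would use in \eqref{vartor} the competitor $u(x):=w\big(d(x,\pa\O)\big)$, where $w$ is Lipschitz with $w(0)=0$, so that $u\in H^1_0(\O)$. Since $|\nabla d|=1$ a.e., the coarea formula gives $\int_\O|\nabla u|^2\,dx=\int_0^\rho w'(t)^2L(t)\,dt$ and $\int_\O u\,dx=\int_0^\rho w(t)L(t)\,dt$, and an integration by parts using $A'=-L$ (recall \eqref{eq.Evans}) together with $A(\rho)=w(0)=0$ turns the latter into $\int_0^\rho w'(t)A(t)\,dt$. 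Writing $v=w'$ and applying Cauchy--Schwarz to $\int_0^\rho vA\,dt=\int_0^\rho\big(v\sqrt L\big)\big(A/\sqrt L\big)\,dt$, the ratio in \eqref{vartor} is maximised by the choice $v=A/L$, which produces equality and yields the displayed bound (a truncation near $t=\rho$ handles the integrability of $A/L$ and keeps $u$ admissible).

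It remains to estimate the one--dimensional integral. From \eqref{eq.boundL} and \eqref{eq.decomp2} I have $L(t)\le c+bt$ for a.e.\ $t$, with $c:=2\HH^1(\pa\O)-P(\O)$ and $b:=2\pi(k-1)$, so that $c+b\rho$ is exactly the denominator $M$ appearing in \eqref{eq.polro}; moreover, integrating $-A'=L\le c+bt$ from $0$ and using $A(0)=|\O|$ gives the pointwise lower bound $A(t)\ge\big(|\O|-ct-\tfrac{b}{2}t^2\big)_+$. Substituting both estimates into the core inequality (enlarging the denominator $L$ and shrinking the numerator $A$) leads to
$$T(\O)\ge\int_0^{\rho}\frac{\big(|\O|-ct-\tfrac{b}{2}t^2\big)_+^2}{c+bt}\,dt,$$
and the proof is reduced to showing that the right--hand side is at least $|\O|^3/(3M^2)$, after which \eqref{eq.polro} follows by taking square roots.

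I expect this final calculus inequality to be the main obstacle. When $k=1$ (so $b=0$) it is an elementary computation which is in fact an identity in the extremal configuration, reproducing Polya's constant $3^{-1/2}$ with the parabolic profile $w(t)=\rho t-\tfrac12 t^2$ and a thin rectangle. For $b\neq0$ one must keep track of the sign of $k-1$, exploit the compatibility relation $|\O|\le c\rho+\tfrac{b}{2}\rho^2$ forced by $A(\rho)=0$, and check the positivity of $c+bt$ on $(0,\rho)$ (delicate precisely in the case $k=0$); the change of variable $s=|\O|-ct-\tfrac b2 t^2$, under which the denominator becomes affine in $s$, should reduce the estimate to a single explicit one--variable inequality, with equality approached by the corresponding thin extremal domains for which $L(t)\equiv c+bt$.
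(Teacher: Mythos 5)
Your proposal follows the paper's own strategy through its core: the reduction to $\#\O<\infty$ via partial unions of connected components (the paper passes to the limit the same way, invoking Sver\'ak's Theorem \ref{theo.Sve} for $T(\O_n)\to T(\O)$, where your additivity argument is even more elementary), and the bound $T(\O)\ge\int_0^{\rho}A^2(t)/L(t)\,dt$ obtained from \eqref{vartor} with a distance-function competitor --- the paper's test function $G(t)=\int_0^t A/L$ is precisely your Cauchy--Schwarz optimal choice $w'=A/L$, its integrability being handled there by the isoperimetric inequality rather than by truncation. The genuine divergence, and the gap, is the endgame: you replace $A(t)$ by the pointwise lower bound $\bigl(|\O|-ct-\tfrac b2 t^2\bigr)_+$ and reduce \eqref{eq.polro} to a one-variable integral inequality that you explicitly leave unproven (``the main obstacle''). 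As written, the proof is incomplete exactly at the step that produces the constant $3^{-1/2}$.

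The idea you are missing is that no lower bound on $A$ is needed. Since $A\in W^{1,\infty}(0,\rho)$ and $A'=-L$ a.e.\ (Theorem \ref{theo.Nareg} and \eqref{eq.Evans}), one has the exact identity
$$\int_0^{\rho}A^2(t)L(t)\,dt=-\int_0^{\rho}A^2(t)A'(t)\,dt=\tfrac13\bigl(A^3(0)-A^3(\rho)\bigr)=\tfrac13|\O|^3,$$
so writing $A^2/L=A^2L/L^2$ and using only the a.e.\ upper bound $L(t)\le P(\O)+2\HH^1(\pa\O\cap\O^{(1)})+2\pi(k-1)\rho$ (which follows from \eqref{eq.boundL} when $k\ge1$, its right-hand side being then nondecreasing in $t$) gives \eqref{eq.polro1} at once, and \eqref{eq.decomp2} then yields \eqref{eq.polro}. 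That said, your reduction is closable for $k\ge1$ by exactly the substitution you suggest: with $f(t)=|\O|-ct-\tfrac b2t^2$, $s=f(t)$ and $t_0\le\rho$ the zero of $f$, one has $(c+bt)^2=c^2+2b\bigl(|\O|-s\bigr)$, hence
$$\int_0^{t_0}\frac{f^2(t)}{c+bt}\,dt=\int_0^{|\O|}\frac{s^2\,ds}{c^2+2b\bigl(|\O|-s\bigr)}\ge\frac{|\O|^3}{3(c+b\rho)^2},$$
the last inequality using $b\ge0$ and $c^2+2b|\O|\le(c+b\rho)^2$, i.e.\ your relation $|\O|\le c\rho+\tfrac b2\rho^2$; this is two lines, but it must be written out, not deferred. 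Finally, the $k=0$ delicacy you flag is real but is a defect of the statement rather than of your scheme: when $b<0$ neither your bound $L\le c-2\pi t$ nor the paper's argument gives $L\le c-2\pi\rho$, and \eqref{eq.polro} read literally fails for a disc in $\A_0$, whose denominator $2\HH^1(\pa\O)-P(\O)-2\pi\rho(\O)$ vanishes; the inequality actually available for $\O\in\A_0$ is the $k=1$ one, via $\A_0\subset\A_1$.
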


\begin{proof}
Without loss of generality we may assume that $\HH^1(\pa\O)<\infty$ and we set $\rho:=\rho(\O)$. First we consider the case $\#\O<\infty$. We define 
$$G(t):=\int_{0}^{t}\frac{A(t)}{L(t)}dt, \quad u(x):=G(d(x,\pa\O)).$$
Notice that, since for any $t\in (0,\rho)$ it holds $L(t)\ge\HH^1(\partial \O(t))\ge P(\O(t))$,
by isoperimetric inequality \eqref{isoper} we have
$$\frac{A(t)}{L(t)}=\frac{|\Omega(t)|^{1/2}}{L(t)}A^{1/2}(t)\le\frac{|\Omega(t)|^{1/2}}{P(\Omega(t))}A^{1/2}(t)\le\frac{|B_1|^{1/2}}{P(B_1)}A^{1/2}(t).$$
In particular, since $A$ is bounded, we get that $L^{-1}A$ is summable on $(0,\rho)$ and $G$ is a Lipschitz function on in the interval $(0,\rho)$. Thus $u\in H^1_{0}(\O)$.
Using \eqref{vartor} and \eqref{eq.boundL} we have
\begin{align*}
T(\O)&\ge \frac{\left(\int_{\O}udx\right)^{2}}{\int_{\O}|\nabla u|^{2}dx}\ge\frac{\left(\int_{0}^{\rho}G(t)L(t)dt\right)^2}{\int_0^\rho (G'(t))^{2}L(t)dt}\ge\int_0^\rho\frac{(A(t))^{2}}{L(t)}\,dt=\int_{0}^\rho \frac{A^2(t)L(t)}{L^{2}(t)}dt\\
&\ge\frac{1}{(P(\O)+2\HH^1(\partial \O\cap\O^{(1)})+2\pi(k-1)\rho)^2}\int_0^\rho A^2(t)L(t)\,dt.
\end{align*}
Since $A\in W^{1,\infty}(0,\rho(\O))$ by Corollary \ref{theo.Nareg} then, set $\psi(s)=s^2,$ we have that the function $\psi\circ A\in W^{1,\infty}(0,\rho(\O))$, so that
$$\int_0^\rho A^2(t)L(t)\,dt=-\int_0^{\rho}A^2(t)A'(t)\,dt=-\frac13\left[A^3(t)\right]_0^{\rho(\O)}=\frac13|\O|^3.$$
Thus
\be\label{eq.polro1}
\frac{T(\O)}{|\O|^3}\ge\frac{1}{3(P(\O)+2\HH^1(\partial \O\cap\O^{(1)})+2\pi(k-1)\rho)^2}.
\ee
Taking into account \eqref{eq.decomp2} we get
$$
\frac{T(\O)}{|\O|^3}\ge\frac{1}{3(2\HH^1(\pa\O)-P(\O)+2\pi(k-1)\rho)^2}.
$$
To prove the general case, let $\O_n$ be defined as in Lemma \ref{lem.top1}. Since $\#\O_n<\infty$ and $\O_n\in \A_k$, by the first part of this proof we have that
$$\frac{T(\O_n)}{|\O_n|^3}\left(2\HH^1(\pa\O_n)-P(\O_n)+2\pi(k-1)\rho_n\right)^2\ge\frac1{3},$$
where $\rho_n:=\rho(\O_n)$.
When $n\to\infty$ we have $|\O_n|\to |\O|$,  $\rho_n\to\rho$ by Lemma \ref{lem.inradcon} and $T(\O_n)\to T(\O)$ by Theorem \ref{theo.Sve}. Hence, passing to the $\limsup$ in the previous inequality and using Lemma \ref{lem.top1}, we get \eqref{eq.polro}.
\end{proof}

\begin{rem}\label{rem.polya}
Note that, in the special case of $\O\in\A_k$ and $\#\O<\infty$, we have the improved estimate \eqref{eq.polro1}.
Moreover, if $k=0,1$,  \eqref{eq.polro} implies
\be\label{eq.polyagen}
F_{1/2}(\O)\ge\frac{3^{-1/2}P(\O)}{2\HH^1(\pa\O)-P(\O)}\, ,
\ee
while, if $k>1$, we can use the inequality $2\pi\rho(\O)\le P(\O)$ (which can be easily derived from \eqref{isoper}), to obtain
\be\label{eq.polyagenk}
F_{1/2}(\O)\ge\frac{3^{-1/2}P(\O)}{2\HH^1(\pa\O)+(k-2)P(\O)}\;.
\ee
\end{rem}

As a consequence of Theorem \ref{theo.Polya}, and using the well known fact that for a Lipschitz open set $\O$ it holds $P(\O)=\HH^1(\pa\O)$, we have the following main results.

\begin{coro}\label{coro.polya} For every $q\le1/2$ we have
\be\label{eq.m01}
m_{1/2,0}=m_{1/2,1}=3^{-1/2}
\ee
and the value $3^{-1/2}$ is asymptotically reached by a sequence of long thin rectangles. More in general, for $k\ge 1$, it holds
\be\label{eq.boundkq}
m_{q,k}\ge (8\pi)^{1/2-q}(3^{1/2}k)^{-1}
\ee
and the sequence $(m_{q,k})$ decreases to zero as $k\to \infty$.
\end{coro}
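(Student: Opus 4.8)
The plan is to split the statement into three parts: the sharp identity $m_{1/2,0}=m_{1/2,1}=3^{-1/2}$, the general lower bound \eqref{eq.boundkq}, and the monotone convergence $m_{q,k}\downarrow 0$. For the identity, the lower bound comes directly from Remark \ref{rem.polya}. Since $\A_0\subset\A_1$ and every Lipschitz set satisfies $P(\O)=\HH^1(\pa\O)$, for any Lipschitz $\O\in\A_1$ the denominator in \eqref{eq.polyagen} equals $2\HH^1(\pa\O)-P(\O)=P(\O)>0$, so \eqref{eq.polyagen} gives $F_{1/2}(\O)\ge 3^{-1/2}$; as this holds on all of $\A_1\supseteq\A_0$, we get $m_{1/2,0}\ge m_{1/2,1}\ge 3^{-1/2}$. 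For the matching upper bound I would use the long thin rectangles $R_\eps=(0,1)\times(0,\eps)$, which are Lipschitz and simply connected (hence in $\A_0\subset\A_1$): with the thin-strip profile $u(y)=y(\eps-y)/2$ one has $P(R_\eps)\to 2$, $T(R_\eps)\sim\eps^3/12$ and $|R_\eps|=\eps$, so $F_{1/2}(R_\eps)\to 2/\sqrt{12}=3^{-1/2}$ as $\eps\to 0$; this is precisely the asymptotics already recorded for the convex class in \cite{BBP20}. Combining the two bounds yields \eqref{eq.m01}.

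For the general lower bound I would again start from Remark \ref{rem.polya}: for Lipschitz $\O\in\A_k$ with $k\ge 1$, inserting $P(\O)=\HH^1(\pa\O)$ into \eqref{eq.polyagen} (case $k=1$) and into \eqref{eq.polyagenk} (case $k>1$) gives in both cases $F_{1/2}(\O)\ge (3^{1/2}k)^{-1}$. The passage to a general $q\le 1/2$ rests on the elementary scaling identity $F_q(\O)=F_{1/2}(\O)\big(|\O|^2/T(\O)\big)^{1/2-q}$, which follows at once from \eqref{Fq}. By the Saint-Venant inequality \eqref{stven} one has $|\O|^2/T(\O)\ge |B|^2/T(B)=8\pi$, and since $1/2-q\ge 0$ this yields $F_q(\O)\ge (8\pi)^{1/2-q}F_{1/2}(\O)\ge (8\pi)^{1/2-q}(3^{1/2}k)^{-1}$. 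Taking the infimum over Lipschitz $\O\in\A_k$ gives \eqref{eq.boundkq}.

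The monotonicity $m_{q,k+1}\le m_{q,k}$ is immediate from $\A_k\subset\A_{k+1}$. To prove $m_{q,k}\to 0$ it suffices to exhibit, for each $N$, a Lipschitz set with many holes and small cost. I would take the perforated square $\O_N:=(0,N)^2\setminus\bigcup_{i,j=0}^{N-1}\overline{B_{r_0}(c_{ij})}$, where $c_{ij}=(i+\tfrac12,j+\tfrac12)$ and $r_0=N^{-2}$. Then $\O_N$ is Lipschitz with $\#\O_N^c=N^2$, so $\O_N\in\A_{N^2}$, while $|\O_N|\ge N^2/2$ and $P(\O_N)=4N+2\pi N^2 r_0=4N+2\pi\le 5N$. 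The crucial point is the torsional estimate: every ball contained in $\O_N$ must avoid the unit-spaced centers $c_{ij}$, so $\rho(\O_N)\le 1/\sqrt 2$, and the classical inradius bound $\|u\|_{L^\infty(\O)}\le\tfrac12\rho(\O)^2$ for the torsion function then gives $T(\O_N)\le\tfrac12\rho(\O_N)^2|\O_N|\le\tfrac14 N^2$. Plugging these into \eqref{Fq}, the powers of $N$ combine to $F_q(\O_N)\le 5\sqrt 2\, N^{-2q}\to 0$; hence $m_{q,N^2}\to 0$, and monotonicity upgrades this to $m_{q,k}\downarrow 0$.

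The main obstacle is the torsional upper bound in the last step: one must control $T(\O_N)$ from above, and the robust way is through the inradius, which is why the holes are placed on a unit grid so that $\rho(\O_N)$ stays bounded while their number grows. I expect the routine verifications (Lipschitz regularity of $\O_N$, the count $\#\O_N^c=N^2$, and the elementary bound $\rho(\O_N)\le 1/\sqrt2$) to be straightforward; the one external input is the $L^\infty$-estimate $\|u\|_\infty\le\tfrac12\rho^2$ for the torsion function, which should be cited from the torsion literature.
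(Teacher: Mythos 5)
Your treatment of the first two assertions is correct and essentially identical to the paper's proof: the lower bounds for $m_{1/2,0},m_{1/2,1}$ and for general $k$ come from \eqref{eq.polyagen} and \eqref{eq.polyagenk} after inserting $P(\O)=\HH^1(\pa\O)$ (valid for Lipschitz sets), the matching upper bound comes from the thin rectangles computed in \cite{BBP20}, and the passage from $q=1/2$ to $q\le 1/2$ uses the identity $F_q(\O)=F_{1/2}(\O)\bigl(|\O|^2/T(\O)\bigr)^{1/2-q}$ together with the Saint-Venant inequality \eqref{stven}. No issues there.

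The last step, $m_{q,k}\to 0$, contains a genuine gap. The ``classical inradius bound'' $\|u\|_{L^\infty(\O)}\le\tfrac12\rho(\O)^2$ that you invoke is \emph{false} for general planar Lipschitz open sets: the torsion function is capacity sensitive, the inradius is not. For instance, removing from $B_1$ the discs of radius $e^{-1/\eps^3}$ centered at the points of an $\eps$-grid produces a Lipschitz set with inradius at most $\eps$ whose removed discs have total logarithmic capacity $O(\eps)$, so its torsion function stays close to that of $B_1$; hence no inequality $\|u\|_\infty\le C\rho^2$ with a constant independent of the number of holes can hold, and your sets $\O_N$ (with $N^2$ holes of radius $N^{-2}$) sit exactly in this degenerate regime. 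In fact your claimed estimate $T(\O_N)\le\tfrac14 N^2$ is provably wrong: testing \eqref{vartor} with $\phi$ equal to $1$ away from the holes, equal to $\log(|x-c_{ij}|/r_0)/\log(1/(4r_0))$ on the annuli $r_0\le|x-c_{ij}|\le\tfrac14$, and cut off near $\pa(0,N)^2$, gives $\int\phi\ge cN^2$ while $\int|\nabla\phi|^2\le CN^2/\log N+CN$, hence $T(\O_N)\ge c\,N^2\log N$. What governs the torsion here is not the radius of the holes but their capacity $2\pi/\log(1/r_0)\sim\pi/\log N$.

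Your conclusion happens to survive this correction, since $F_q(\O_N)\le CN^{-2q}(\log N)^q\to0$ for every $q>0$; but the matching upper bound $T(\O_N)\le CN^2\log N$ is the real content of the step, and it needs a capacity/homogenization-type argument (e.g.\ comparing $u$, via the maximum principle, with a periodic supersolution built from the cell problem of the plane perforated by the same grid of discs), not an inradius bound. Note also that you cannot sidestep the capacity issue by enlarging the holes: with holes of fixed radius the perimeter grows like $N^2$ and then $F_q(\O_N)$ behaves like $N^{1-2q}$, which diverges for $q<1/2$, so small holes --- and hence the logarithmic capacity analysis --- are unavoidable in this construction. The paper avoids all of this by quoting the sequence $(\O_{1,n})$ of Theorem~2.1 of \cite{BBP20}, a perforated disc for which bounded perimeter and $T(\O_{1,n})\to0$ were already established there, observing that $\O_{1,n}\in\A_k$ for $k$ large and using the monotonicity of $k\mapsto m_{q,k}$. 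If you do not want to cite that result, you must supply the capacity estimate yourself.
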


\begin{proof}
By inequality \eqref{eq.polyagen} we have that $m_{1/2,0}, m_{1/2,1}\ge 3^{-1/2}$. Moreover the computations made in \cite{BBP20} show that the value $3^{-1/2}$ is asymptotically reached by a sequence of long thin rectangles, that are clearly in $\A_0$. Thus, being $A_0\subset\A_1$, \eqref{eq.m01} holds. To prove \eqref{eq.boundkq} it is enough to notice that
$$F_q(\O)=F_{1/2}(\O)\left(\frac{T(\O)}{|\O|^{2}}\right)^{q-1/2}$$
and apply \eqref{eq.polyagenk} together with the Saint-Venant inequality \eqref{stven}. 
Finally to prove that $m_{q,k}\to 0$ as $k\to \infty$, it is enough to consider the sequence $(\O_{1,n})$ defined in Theorem 2.1 of \cite{BBP20}, taking into account that $\O_{1,n}\in \A_k$ for $k$ big enough.
\end{proof}

We now introduce a relaxed functional $\F_{q,k}$. 
More precisely, for $\O\in\A_k$ we denote by $\mathcal{O}_k(\O)$ the class of equi-bounded sequences of Lipschitz sets in $\A_k$ which converge to $\O$ in the sense of co-Hausdorff and we define $\F_{q,k}$ as follows:
$$\F_{q,k}(\O):=\inf\left\{\liminf_{n\to\infty} F_q(\O_{n}): \ (\O_n)\in\mathcal{O}_k(\O) \right\}.$$
It is straightforward to verify that $\F_{q,k}$ is translation invariant and scaling free. 
As already mentioned in the introduction, when $q<1/2$, we prove the existence of a minimizer for $\F_{q,k}$. We notice this relaxation procedure can be made on the perimeter term only. More precisely, defining
$$\PP_k(\O):=\inf \left\{\liminf_{n\to\infty} P(\O_{n})\ :\ (\O_n)\in\mathcal{O}_k(\O)\right\},$$
the following proposition holds.
\begin{prop}\label{prop.PPkF}
For every $\O\in\A_k$ we have
$$\F_{q,k}(\O)=\frac{\PP_k(\O)T^{q}(\O)}{|\O|^{2q+1/2}}.$$
\end{prop}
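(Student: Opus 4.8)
The plan is to show that the perimeter is the only ingredient of $F_q$ that genuinely requires relaxation: along any admissible sequence both the torsional rigidity and the volume pass to the limit, so the infimum defining $\F_{q,k}(\O)$ factorizes into the continuous factor $T^q(\O)|\O|^{-2q-1/2}$ times the infimum defining $\PP_k(\O)$. I first record the two continuity facts available on $\mathcal{O}_k(\O)$. If $(\O_n)\in\mathcal{O}_k(\O)$, then, being equi-bounded sets of $\A_k$ with $\O_n\overset{H^c}{\to}\O$, Sver\'ak's Theorem \ref{theo.Sve} gives $T(\O_n)\to T(\O)$; moreover $T(\O)>0$ since $\O$ is a nonempty open set. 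The volume is more delicate, as it is only lower semicontinuous for $H^c$-convergence. However, along any subsequence on which the perimeters stay bounded, say $\sup_j P(\O_{n_j})<\infty$, the identity $P(\cdot)=\HH^1(\pa\,\cdot\,)$ for Lipschitz sets gives $\sup_j\HH^1(\pa\O_{n_j})<\infty$, so Theorem \ref{theo.convmeas} yields $\O_{n_j}\to\O$ in $L^1$ and in particular $|\O_{n_j}|\to|\O|$.

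For the inequality $\F_{q,k}(\O)\le\PP_k(\O)T^q(\O)|\O|^{-2q-1/2}$ (trivial when $\PP_k(\O)=+\infty$) I would fix $\eps>0$ and choose $(\O_n)\in\mathcal{O}_k(\O)$ with $\liminf_n P(\O_n)<\PP_k(\O)+\eps$. Passing to a subsequence realizing this $\liminf$, the perimeters are bounded, so the continuity facts above give $F_q(\O_{n_j})\to T^q(\O)|\O|^{-2q-1/2}\liminf_n P(\O_n)$. Since the subsequence still lies in $\mathcal{O}_k(\O)$, this bounds $\F_{q,k}(\O)$ above by $T^q(\O)|\O|^{-2q-1/2}(\PP_k(\O)+\eps)$, and letting $\eps\to0$ concludes.

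For the reverse inequality I take an arbitrary $(\O_n)\in\mathcal{O}_k(\O)$ and a subsequence achieving $\liminf_n F_q(\O_n)$. If along a further subsequence the perimeters remain bounded, the same continuity argument identifies the limit of $F_q$ as $T^q(\O)|\O|^{-2q-1/2}\liminf_n P(\O_n)$, which is at least $T^q(\O)|\O|^{-2q-1/2}\PP_k(\O)$ by the very definition of $\PP_k(\O)$. If instead $P(\O_{n_j})\to+\infty$, then using that the sets are contained in a fixed ball, whence $|\O_{n_j}|\le C$, together with $T(\O_{n_j})\to T(\O)>0$, I obtain $F_q(\O_{n_j})\to+\infty$, so the bound holds a fortiori. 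Taking the infimum over all admissible sequences gives $\F_{q,k}(\O)\ge\PP_k(\O)T^q(\O)|\O|^{-2q-1/2}$.

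The main obstacle is precisely the volume term: because $|\cdot|$ is only lower semicontinuous for co-Hausdorff convergence, one cannot factor the infimum naively over the whole class $\mathcal{O}_k(\O)$. The resolution is that the factorization only needs to be justified along perimeter-bounded subsequences, where Theorem \ref{theo.convmeas} upgrades $H^c$-convergence to $L^1$-convergence and thus restores volume continuity, while the divergent-perimeter case is harmless, since equi-boundedness caps the volume from above and the continuity and positivity of $T$ force $F_q$ to blow up.
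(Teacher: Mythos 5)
Your proof is correct and follows essentially the same route as the paper: both directions hinge on Sver\'ak's Theorem \ref{theo.Sve} for continuity of $T$, on Theorem \ref{theo.convmeas} (via $P=\HH^1(\pa\,\cdot\,)$ for Lipschitz sets) to upgrade $H^c$-convergence to volume convergence along perimeter-bounded sequences, and on the observation that equi-boundedness plus $T(\O_n)\to T(\O)>0$ makes a finite $F_q$-bound equivalent to a perimeter bound. Your explicit divergent-perimeter case in the $\ge$ direction is just the contrapositive of the paper's step where the perimeter bound is extracted from the assumed bound on $F_q$, so the two arguments coincide in substance.
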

\begin{proof}
Fix $\eps>0$. Suppose that $\infty>\PP_k(\O)+\eps\ge\lim_n P(\O_n)$, for some $(\O_n)\in\mathcal{O}_{k}(\O)$. By Theorems \ref{theo.Sve} and \ref{theo.convmeas}, we have
$$
\frac{(\PP_k(\O)+\eps)T^{q}(\O)}{|\O|^{2q+1/2}}\ge\lim_n\left(\frac{P(\O_n)T^q(\O_n)}{|\O_n|^{2q+1/2}}\right)\ge \F_{q,k}(\O),
$$
and since $\eps$ is arbitrary we obtain the $\le$ inequality.
Similarly, to prove the opposite inequality assume $\lim_n F_q(\O_n)\le \F_{q,k}(\O)+\eps<\infty$, for some sequence $(\O_n)\in \mathcal{O}_k(\O)$. Let $D$ be a compact set which contains each $\O_n$. Thanks to Theorem \ref{theo.Sve}, we have that $T(\O_n)\to T(\O)$ and, since $P(\O_n)=\HH^1(\O_n)$, we have also
$$\sup_n\HH^1(\pa\O_n)=\sup_n\left( \frac{F_q(\O_n)|\O_n|^{2q+1/2}}{\displaystyle{T^q}(\O_n)}\right)\le
\sup_n \left(\frac{F_q(\O_n)|D|^{2q+1/2}}{\displaystyle{T^q}(\O_n)}\right)<+\infty.$$
Applying again Theorem \ref{theo.convmeas} we have $|\O_n|\to|\O|$ and we can conclude
$$\frac{\PP_k(\O)T^q(\O)}{|\O|^{2q+1/2}}\le\lim_n F_q(\O_n)\le\F_{q,k}(\O)+\eps,$$
which implies the $\ge$ inequality as $\eps\to 0$.
\end{proof}

The perimeter $\PP_k$ satisfies the following properties.

\begin{prop}\label{prop.Pk}
For every $\O\in\A_k$ of finite perimeter we have
\be\label{eq.PPk}
P(\O)\le\PP_k(\O)\le 2\HH^1(\pa\O)-P(\O).
\ee
Moreover if $\#\O<\infty$ and $\HH^1(\pa\O)<+\infty$ it holds
\be\label{eq.PPkH1}
\HH^1(\partial\O)\le\PP_k(\O)\le P(\O)+2\HH^1(\partial \O \cap \O^{(1)})
\ee
and $P(\O)=\PP_k(\O)$ if and only if $P(\O)=\HH^1(\pa\O)$.
\end{prop}

\begin{proof}
Taking into account Theorem \ref{theo.convmeas} and lower semicontinuity of the perimeter with respect to the $L^1$-convergence we have $\PP_k(\O)\ge P(\O)$.
To prove the right-hand inequalities in \eqref{eq.PPk} and \eqref{eq.PPkH1} it is sufficient to take the sequence $(A_n)$ given by Theorem \ref{theo.approxim}. Finally, when $\#\O<\infty$, the inequality $\HH^1(\pa\O)\le \PP_k(\O)$ follows by Theorem \ref{theo.convmeas}. 
\end{proof}

\begin{rem} \label{ex.ce}
If we remove the assumption $\#\O<\infty$, then \eqref{eq.PPkH1} is no longer true. For instance, we can slightly modify the Example $3.53$ in \cite{AFP} to define $\O\in\ \A_0$ such that $P(\O),\PP_0(\O)<\infty$ while $\HH^1(\pa\O)=\infty$. More precisely let $(q_n)$ be an enumeration of $\mathbb{Q}^2\cap B_1(0)$ and $(r_n)\subset(0,\eps)$ be a decreasing sequence such that
$\sum_n 2\pi r_n\le 1$. We recursively define the following sequence of open sets.
Let 
$$\O_0:=B_{r_0}(q_0),\ \O_{n+1}:=\O_n\cup B_{s_n}(q_{h_n}),$$
where 
$$h_n:=\inf\{k: q_k \in\overline\O_n^c\},\quad s_n:=r_{n+1}\wedge\sup\{r_k: B_{r_k}(q_{h_n})\cap \O_n=\emptyset\}.$$ 
Finally let $\O=\bigcup_n\O_n$. By construction $\O_n\overset{H^c}{\to}\O$ and since $\O_n\in\A_0$ for all $n$, we have also $\O\in\A_0$. Moreover we notice that $P(\O)\le 1$ and it is easy to verify that the two dimensional Lebesgue measure of $\pa\O$ is positive, which implies $\HH^1(\pa\O)=\infty$. Finally, since the sequence $(\O_n)\in \mathcal{O}_0(\O)$, we have also $\PP_0(\O)\le 1$. 
\end{rem}

Next we prove that the relaxed functional $\F_{q,k}$ agrees with $F_q$ on the class of Lipschitz open sets in $\A_k$.
\begin{coro}\label{coro.Frel}
For every $\O\in\A_k$ we have
\be\label{eq.FgF}
\F_{q,k}(\O)\ge F_q(\O).
\ee
If, in addition, $P(\O)=\HH^1(\pa\O)$ then we have
\be \label{eq.FgF1}
F_q(\O)=\F_{q,k}(\O).
\ee
In particular $\F_{q,k}$ and $F_q$ coincide on the class of Lipschitz sets and it holds
\be\label{eq.infrel}
m_{q,k}=\inf\{\F_{q,k}(\O)\ :\ \O\in\A_k\}.
\ee
\end{coro}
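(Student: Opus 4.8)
The plan is to derive all four assertions from the two structural results already in hand: Proposition \ref{prop.PPkF}, which factors the relaxed functional as $\F_{q,k}(\O)=\PP_k(\O)T^q(\O)|\O|^{-2q-1/2}$, and Proposition \ref{prop.Pk}, which sandwiches $\PP_k(\O)$ between $P(\O)$ and $2\HH^1(\pa\O)-P(\O)$. Since all the analytic work is packaged there, the corollary is essentially a matter of bookkeeping.

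First I would establish \eqref{eq.FgF}. Writing $F_q(\O)=P(\O)T^q(\O)|\O|^{-2q-1/2}$ and invoking the factorization of Proposition \ref{prop.PPkF}, the claim $\F_{q,k}(\O)\ge F_q(\O)$ reduces exactly to $\PP_k(\O)\ge P(\O)$. When $\O$ has finite perimeter this is the left inequality of \eqref{eq.PPk}. When $P(\O)=+\infty$ both functionals are $+\infty$: indeed $\O$ is bounded and nonempty, so $|\O|<\infty$ and $T(\O)>0$, whence $F_q(\O)=+\infty$; and if $\PP_k(\O)$ were finite, a competitor $(\O_n)\in\mathcal{O}_k(\O)$ with $\sup_n P(\O_n)<\infty$ would, via Theorem \ref{theo.convmeas} and the $L^1$-lower semicontinuity of the perimeter, force $P(\O)<\infty$, a contradiction.

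Next, for \eqref{eq.FgF1}, assume $P(\O)=\HH^1(\pa\O)$. If both quantities are infinite the equality follows from \eqref{eq.FgF}, as both functionals take the value $+\infty$. If they are finite, the right inequality in \eqref{eq.PPk} gives $\PP_k(\O)\le 2\HH^1(\pa\O)-P(\O)=P(\O)$, which combined with $P(\O)\le\PP_k(\O)$ yields $\PP_k(\O)=P(\O)$; substituting this into Proposition \ref{prop.PPkF} produces $\F_{q,k}(\O)=F_q(\O)$. Since every Lipschitz open set satisfies $P(\O)=\HH^1(\pa\O)$, the two functionals agree on Lipschitz sets of $\A_k$.

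Finally, for \eqref{eq.infrel} I would compare the two infima in both directions. For $\le$, restricting the infimum of $\F_{q,k}$ to Lipschitz sets and using the coincidence just proved gives $\inf_{\O\in\A_k}\F_{q,k}(\O)\le\inf\{F_q(\O):\O\in\A_k,\ \O\text{ Lipschitz}\}=m_{q,k}$. For the reverse inequality I would unfold the definition of $\F_{q,k}$: any competitor $(\O_n)\in\mathcal{O}_k(\O)$ is made of Lipschitz sets in $\A_k$, so $F_q(\O_n)\ge m_{q,k}$ for every $n$, hence $\liminf_n F_q(\O_n)\ge m_{q,k}$, and passing to the infimum over such sequences gives $\F_{q,k}(\O)\ge m_{q,k}$ for every $\O\in\A_k$ (trivially so when $\mathcal{O}_k(\O)=\emptyset$), so that $\inf_{\O\in\A_k}\F_{q,k}(\O)\ge m_{q,k}$. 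I do not expect a genuine obstacle here; the only points needing care are the degenerate infinite-perimeter case in \eqref{eq.FgF} and the two-sided comparison of infima in \eqref{eq.infrel}.
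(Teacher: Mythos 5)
Your proposal is correct and follows essentially the same route as the paper: both \eqref{eq.FgF} and \eqref{eq.FgF1} are read off from the factorization in Proposition \ref{prop.PPkF} combined with the sandwich \eqref{eq.PPk}, and \eqref{eq.infrel} is the standard two-sided comparison that the paper summarizes as ``a general property of relaxed functionals.'' Your extra care with the infinite-perimeter case and the possibility $\mathcal{O}_k(\O)=\emptyset$ only makes explicit what the paper leaves implicit.
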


\begin{proof}
The inequalities \eqref{eq.FgF} and \eqref{eq.FgF1} follow by Proposition \ref{prop.PPkF} and \eqref{eq.PPk}. The last part of the theorem follows as a general property of relaxed functionals. 
\end{proof}

\begin{lemm}\label{lem.coninf}
For every Lipschitz set $\O\in\A_k$, there exists a sequence of connected open sets $(\O_n)\subset\A_k$ such that
$$P(\O_n)=\HH^1(\pa\O_n)\qquad\text{and}\qquad\lim_{n\to\infty}F_q(\O_n)=F_q(\O).$$
\end{lemm}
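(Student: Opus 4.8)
The plan is to reduce to a single connected piece by joining the components of $\O$ with thin channels, exploiting the translation invariance and the additivity of $P$, $T$ and $|\cdot|$ to make the extra perimeter as small as we like. First I would write $\O=\bigcup_{i=1}^m\O^i$ as the union of its connected components; since $\O$ is a bounded Lipschitz set this union is finite and each $\O^i$ is open and Lipschitz. Because $P$, $T$ and the Lebesgue measure are additive over disjoint families and invariant under translations, for every $\delta>0$ I would rigidly translate the pieces so as to line them up along a horizontal segment, leaving gaps of width $\delta$ between consecutive pieces and keeping them pairwise disjoint; call $\hat\O_\delta$ the resulting open set. Rigidly translating the pieces can only destroy bounded components of the complement, and arranging them along a line with positive gaps creates none, so $\hat\O_\delta\in\A_k$; moreover $P(\hat\O_\delta)=P(\O)$, $T(\hat\O_\delta)=T(\O)$ and $|\hat\O_\delta|=|\O|$, whence $F_q(\hat\O_\delta)=F_q(\O)$.

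Next, for $0<w<\delta$, I would join each consecutive pair of pieces by a straight rectangular channel of length $\delta$ and width $w$ placed across the gap, attaching its two mouths transversally to the facing boundaries so that no cusp is produced. The union $\O_{\delta,w}$ is then a connected Lipschitz open set, and since a single channel across a gap encloses no new region it still belongs to $\A_k$; being Lipschitz, $P(\O_{\delta,w})=\HH^1(\pa\O_{\delta,w})$. I would then control the three ingredients of $F_q$ as $w\to0^+$ with $\delta$ fixed. A direct computation on the channels gives $|\O_{\delta,w}|=|\O|+O(m\delta w)\to|\O|$ and $P(\O_{\delta,w})\to P(\O)+2(m-1)\delta$, each channel contributing its two long sides of length $\delta$ while the mouths, of length $w$, disappear in the limit. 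For the torsion I would note that $\hat\O_\delta\subset\O_{\delta,w}$ and that every point of $(\hat\O_\delta)^c$ lies within $w$ of $(\O_{\delta,w})^c$, so $d_{H^c}(\O_{\delta,w},\hat\O_\delta)\le w$ and hence $\O_{\delta,w}\overset{H^c}{\to}\hat\O_\delta$ as $w\to0^+$; since these sets form an equi-bounded family in $\A_k$, Theorem \ref{theo.Sve} yields $T(\O_{\delta,w})\to T(\hat\O_\delta)=T(\O)$. Combining the three limits,
\[
\lim_{w\to0^+}F_q(\O_{\delta,w})=\frac{\big(P(\O)+2(m-1)\delta\big)T^q(\O)}{|\O|^{2q+1/2}}=F_q(\O)+c\,\delta,
\]
with $c=2(m-1)T^q(\O)|\O|^{-(2q+1/2)}$.

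To finish I would run a diagonal argument: taking $\delta=1/j$ I would choose $w_j\in(0,1/j)$ so small that $\big|F_q(\O_{1/j,w_j})-F_q(\O)-c/j\big|<1/j$, and set $\O_j:=\O_{1/j,w_j}$. Then $(\O_j)\subset\A_k$ is a sequence of connected Lipschitz sets, so $P(\O_j)=\HH^1(\pa\O_j)$, and $F_q(\O_j)\to F_q(\O)$, as required. The hard part is the torsion: the channels must be thin enough that $T$ is not disturbed, which is exactly Sver\'ak's continuity under co-Hausdorff convergence (Theorem \ref{theo.Sve}), while the perimeter surplus $2(m-1)\delta$ they create can be killed only by shrinking the gaps; reconciling these two limits forces the two-parameter construction and the diagonal extraction. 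One must also respect the topological budget, which is why the pieces are joined by a single channel per gap, so that no new bounded complementary component is ever produced.
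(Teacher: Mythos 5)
Your proposal is correct and follows essentially the same strategy as the paper's proof: decompose $\O$ into its finitely many Lipschitz components, join them by thin channels whose perimeter contribution is made negligible, use additivity and translation invariance for $P$, $T$, $|\cdot|$, invoke Sver\'ak's theorem (Theorem \ref{theo.Sve}) for the torsion under co-Hausdorff convergence, and conclude by a diagonal argument. The only difference is organizational: you line up all components at gap $\delta$ and let the channel width $w\to0$ before $\delta\to0$, whereas the paper proceeds by induction on $\#\O$, translating one component toward another so that the channel length itself tends to zero; both devices kill the perimeter surplus in the same way.
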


\begin{proof}
Since $\O$ is a bounded Lipschitz set we necessarily have $\#\O<\infty$. If $\O$ is connected we can take $\O_n$ to be constantly equal to $\O$. Suppose instead that $\#\O=2$ and let $\O^1$ and $\O^2$ be the connected components of $\O$. Since $\O$ is Lipschitz there exist $x_1\in\pa \O^1, x_2\in\pa\O^2$ such that
$$
0<d:=d(x_1,x_2)=\inf\{d(w,v): \ v\in\O^1,\ w\in\O^2\}.
$$
Define 
$$\O^{2}_n:=\O^2-\left(1-\frac 1 n\right)(x_2-x_1).$$
Clearly we have $\overline{\O^2_{n}}\cap \overline{\O^1}=\emptyset$ for every $n\ge 1$ and $\O^{2}_1=\O^2$. 
We set $$x_n=x_2-\left(1-\frac 1 n\right)(x_2-x_1).$$
Now we can join $x_1$ and $x_n$ through a segment $\Sigma_n$. By using the fact that the boundary of both $\pa\O^1$ and $\pa\O^2_n$ are represented as the graph of a Lipschitz functions in a neighborhood of $x_1$ and $x_n$ respectively, then the thin open channel
$$C_\eps:=\{ x\in\R^2\setminus\overline\O^1\cup\overline\O^2_n\ :\ d(x,\Sigma_n)<\eps\}$$
of thickness $\eps:=\eps(n)$ is such that the set
$$\O_n:=\O^1\cup \O^2_n\cup C_{\eps}$$
belongs to $\A_k$, it is connected and $P(\O_n)=\HH^1(\pa\O_n)$. The following identities are then verified
$$|\O_n|\to|\O|,\quad T(\O_n)\to T(\O),\quad P(\O_n)\approx P(\O^1)+P(\O^2)+\frac{2\eps}{n},$$ 
so that $F_q(\O_n)\to F_q(\O)$ (notice that this does not imply $\O_n\to\O$). The general case is achieved by induction on $\#\O$. More precisely suppose $\#\O=N+1$. Let $(\O^i)$ be the connected components of $\O$. By induction we have
$$F_q(\O^1\cup\dots \cup\O^N)=\lim_{n\to\infty}F_q(\O'_n),$$
for a sequence $(\O'_n)\subset\A_k$ of connected open sets satisfying $P(\O'_n)=\HH^1(\pa\O'_n)$. 
Using the fact that, being $\O$ Lipschitz, the value of $F_q(\O)$ do not change if we translate (possibly in different direction and with different magnitude) each connected component of $\O$, being careful to avoid intersections, we can suppose $\overline{\O}^{N+1}$ to have a positive distance from $\overline{\O}'_n$, as $n$ is large enough.
We then apply the previous step to define a sequence of connected open sets $\O_{n,m}\in\ A_k$ such that $P(\O_{n,m})=\HH^1(\pa\O_{n,m})$ and 
$$F_q(\O_{n,m})\to F_q(\O'_n\cup \O^{N+1}),$$
as $m\to\infty$. Using a diagonal argument we achieve the thesis.
\end{proof}

We finally show the existence of a relaxed solution to the minimization problem of $\F_{q,k}$ in $\A_k$ when $q<1/2$.
\begin{theo}\label{theo.exis}
For $q<1/2$ there exists a nonempty bounded open set $\O^{\star}\in\A_k$ minimizing the functional $\F_{q,k}$ such that $\HH^1(\pa\O^\star)<\infty$. 
\end{theo}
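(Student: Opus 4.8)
The plan is to use the direct method of the calculus of variations, working with the relaxed functional $\F_{q,k}$ whose minimization, by \eqref{eq.infrel}, has the same infimum $m_{q,k}$ as the original Lipschitz problem. First I would take a minimizing sequence $(\O_n)\subset\A_k$ for $\F_{q,k}$. By \eqref{eq.infrel} and the definition of the relaxed functional, I may actually replace each $\O_n$ by a Lipschitz set in $\A_k$ with $F_q(\O_n)$ arbitrarily close to $m_{q,k}$, so I will assume $(\O_n)$ is a sequence of Lipschitz sets in $\A_k$ with $F_q(\O_n)\to m_{q,k}$ and $P(\O_n)=\HH^1(\pa\O_n)$. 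Since the functional is scaling free and translation invariant, I normalize $|\O_n|=1$ and translate so that the sets are contained in a fixed large ball; equi-boundedness is the crucial point that must be arranged. By the Saint--Venant inequality \eqref{stven} we control $T(\O_n)$ from above in terms of $|\O_n|=1$, and from the convergence $F_q(\O_n)\to m_{q,k}<\infty$ together with a lower bound on $T(\O_n)$ we obtain a uniform bound $\sup_n P(\O_n)=\sup_n\HH^1(\pa\O_n)<\infty$.

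Next I would extract a convergent subsequence. Using compactness of the co-Hausdorff convergence on equi-bounded sets in $\A_k$ (recalled after the definition of $d_{H^c}$, citing Remark 2.2.20 of \cite{He}), I pass to a subsequence with $\O_n\overset{H^c}{\to}\O^\star$ for some $\O^\star\in\A_k$. Because $\sup_n\HH^1(\pa\O_n)<\infty$, Theorem \ref{theo.convmeas} applies and yields both $\O_n\to\O^\star$ in $L^1$ (so $|\O_n|\to|\O^\star|$) and, via \eqref{eq.golab}, the semicontinuity estimate $\HH^1(\pa\O^\star)\le\liminf_n\HH^1(\pa\O_n)<\infty$; in particular $\HH^1(\pa\O^\star)<\infty$, which is the regularity assertion in the statement. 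Sver\'ak's Theorem \ref{theo.Sve} gives $T(\O_n)\to T(\O^\star)$. I must also check $\O^\star\neq\emptyset$: since $|\O^\star|=\lim_n|\O_n|=1>0$, the limit set is nonempty.

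It then remains to show $\F_{q,k}(\O^\star)\le m_{q,k}$, i.e. that $\O^\star$ is a minimizer. By Proposition \ref{prop.PPkF} one has $\F_{q,k}(\O^\star)=\PP_k(\O^\star)T^q(\O^\star)|\O^\star|^{-2q-1/2}$, so I need an upper bound on the relaxed perimeter $\PP_k(\O^\star)$. The natural candidate competitor sequence in the definition of $\PP_k(\O^\star)$ is $(\O_n)$ itself: it is an equi-bounded sequence of Lipschitz sets in $\A_k$ converging to $\O^\star$ in co-Hausdorff, hence $(\O_n)\in\mathcal{O}_k(\O^\star)$, which gives $\PP_k(\O^\star)\le\liminf_n P(\O_n)$. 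Combining this with $T(\O_n)\to T(\O^\star)$ and $|\O_n|\to|\O^\star|$ yields
$$
\F_{q,k}(\O^\star)=\frac{\PP_k(\O^\star)T^q(\O^\star)}{|\O^\star|^{2q+1/2}}\le\liminf_{n\to\infty}\frac{P(\O_n)T^q(\O_n)}{|\O_n|^{2q+1/2}}=\liminf_{n\to\infty}F_q(\O_n)=m_{q,k},
$$
and the reverse inequality is \eqref{eq.infrel}, so $\O^\star$ is optimal.

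The main obstacle is establishing equi-boundedness of a minimizing sequence, which is what allows all the compactness and continuity tools (co-Hausdorff compactness in $\A_k$, Theorems \ref{theo.Sve} and \ref{theo.convmeas}) to be applied. Without an a priori diameter bound, pieces of the sets could escape to infinity; the hypothesis $q<1/2$ should enter precisely here, guaranteeing that spreading mass out or elongating the set is not favorable, so that one may confine a minimizing sequence to a fixed ball while keeping $F_q$ close to $m_{q,k}$. I expect this confinement argument to require the Saint--Venant inequality \eqref{stven} and the isoperimetric inequality \eqref{isoper} to rule out escaping or dispersing configurations, together with the fact that $m_{q,k}>0$ from the lower bound \eqref{eq.boundkq}.
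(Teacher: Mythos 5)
Your proposal has the right skeleton (direct method, co-Hausdorff compactness, Sver\'ak's theorem, Theorem \ref{theo.convmeas}, Proposition \ref{prop.PPkF} for minimality of the limit), but the load-bearing step --- equi-boundedness of the minimizing sequence --- is left unproven, and the tools you suggest cannot close it. Note first that the part you could prove is not the part you gesture at: there is no a priori lower bound on $T(\O_n)$ (it degenerates as the sets elongate); what is true is that, with $|\O_n|=1$ and $P(\O_n)=\HH^1(\pa\O_n)$, Theorem \ref{theo.Polya} together with $2\pi\rho(\O_n)\le P(\O_n)$ gives $T(\O_n)\ge c_k\,P^{-2}(\O_n)$ for a positive constant $c_k$, hence $F_q(\O_n)\ge c_k^{\,q}\,P^{1-2q}(\O_n)$, and it is exactly the hypothesis $q<1/2$ that turns the bound $F_q(\O_n)\le C$ into $\sup_n P(\O_n)<\infty$. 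But a perimeter bound only controls the size of each connected piece. The real obstruction is disconnectedness: a set in $\A_k$ may have many connected components, $F_q$ is completely insensitive to their relative positions, and so a minimizing sequence can consist of, say, two pieces drifting infinitely far apart. No translation of the whole set makes such a sequence equi-bounded, and Saint-Venant, the isoperimetric inequality and $m_{q,k}>0$ cannot help, since none of them sees relative positions either. The paper resolves this by modifying the sets themselves: Lemma \ref{lem.coninf} replaces the Lipschitz minimizing sequence by \emph{connected} sets in $\A_k$ with $P=\HH^1(\pa\,\cdot\,)$ (the components are joined by thin channels at negligible cost for $F_q$); after normalizing $\HH^1(\pa\O_n)=P(\O_n)=1$, connectedness forces a uniform diameter bound, and equi-boundedness follows by translation.

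The missing connectedness also breaks your derivation of $\HH^1(\pa\O^\star)<\infty$: inequality \eqref{eq.golab} in Theorem \ref{theo.convmeas} is not available under the sole hypothesis $\sup_n\HH^1(\pa\O_n)<\infty$; it requires in addition $\sup_n\#\pa\O_n<\infty$ or $\#\O^\star<\infty$, neither of which you know, since Lipschitz sets in $\A_k$ can have arbitrarily many connected components and hence arbitrarily many boundary components. In the paper this hypothesis is again supplied by connectedness (a connected set in $\A_k$ has at most $k+1$ boundary components), so that Go\l ab's Theorem \ref{theo.Golab} applies. Two further remarks. First, once the sequence is made connected it is in general no longer Lipschitz, so your appeal to $(\O_n)\in\mathcal{O}_k(\O^\star)$ (whose elements must be Lipschitz) has to be replaced by the paper's diagonal argument: each $\O_n$ is approximated by Lipschitz sets via Theorem \ref{theo.approxim}, which recovers the perimeter in the limit precisely because $P(\O_n)=\HH^1(\pa\O_n)$ makes the upper bound in Theorem \ref{theo.approxim}(iv) collapse to $P(\O_n)$. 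Second, your nonemptiness argument ($|\O^\star|=\lim_n|\O_n|=1$) would indeed be fine under your volume normalization once $L^1$-convergence is secured; the paper, which normalizes the perimeter instead, must rule out vanishing volume, and does so through \eqref{eq.final1}--\eqref{eq.final2} (again using $q<1/2$) and Lemma \ref{lem.inradcon}.
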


\begin{proof}
Let $(\widetilde\O_n)\subset\A_k$ be a sequence of Lipschitz sets such that
$$
\lim_{n\to\infty} F_q(\widetilde\O_n)=m_{q,k}.
$$
Applying Lemma \ref{lem.coninf} and \eqref{eq.FgF1}, we can easily replace the sequence $(\widetilde\O_n)$ with a sequence $(\O_n)\subset\A_k$ of connected (not necessarily Lipschitz) open sets, satisfying $\HH^1(\O_n)=P(\O_n)$ and such that
$$\lim_{n\to\infty} F_q(\O_n)=\lim_{n\to\infty} F_q(\widetilde\O_n)=m_{q,k}.$$
Eventually using the translation invariance of $F_q$ and possibly rescaling the sequence $(\O_n)$, we can assume that $(\O_n)$ is equi-bounded and
\be\label{ipinf}
\HH^1(\O_n)=P(\O_n)=1.
\ee 
By compactness, up to subsequences, there exists an open sets $\O^\star\in\A_k$ such that $\O_n\overset{H^c}{\to}\O^\star$.
By \eqref{eq.infrel} we have
$$m_{q,k}\le \F_{q,k}(\O^\star).$$
Let us prove the opposite inequality. We notice that, by Theorem \ref{theo.approxim} and \eqref{ipinf}, for every $n$ there exists a sequence $(A_{n,m})_m\subset\A_k$ of Lipschitz sets, such that, as $m\to\infty$,
$$
P(A_{n,m})\to P(\O_n)\ \text{and}\ |A_{n,m}|\to |\O_n|.
$$
By Theorem \ref{theo.Sve}, we have also $T(A_{n,m})\to T(\O_n)$ as $m\to \infty$.
Thus
$$F_q(\O_n)=\lim_{m\to\infty}F_q(A_{n,m}).$$ 
A standard diagonal argument allows us to define a subsequence $A_{n,m_n}\in\mathcal{O}_k(\O^\star)$. Then we have
$$\F_{q,k}(\O^\star)\le \lim_n F_{q}(A_{n,m_n})=\lim_{n}F_{q}(\O_n)= m_{q,k}.
$$
Hence $\O^\star$ is a minimum for $\F_{q,k}$.
Moreover, notice that there exists a compact set $K$ containing $\pa\O^\star$ such that, up to a subsequence, $\pa\O_n\overset{H}{\to} K$.
So, being $\O_n$ connected, we have
$$
\sup_n\#\pa\O_n<\infty, 
$$ 
and by Theorem \ref{theo.Golab},
$$
\HH^1(\pa\O^\star)\le \HH^1(K)\le \liminf_{n\to\infty}\HH^1(\O_n)\le 1.
$$
To conclude we have only to show that $\O^*$ is nonempty. Notice that for $n$ big enough there exists $C>0$ such that $F_q(\O_n)< C$. Thus we have
\be \label{eq.final1}
C>F_q(\O_n)=\frac{T^q(\O_n)}{|\O_n|^{2q+1/2}}=\left(\frac{T(\O_n)}{|\O_n|^{3}}\right)^{q}|\O_n|^{q-1/2}\ge \frac{1}{|\O_n|^{1/2-q}(\sqrt{3}k)^{2q}}\;,
\ee
where the last inequality follows by  \eqref{eq.polyagen}, using \eqref{ipinf}. By \eqref{eq.coarea} we have also
\be \label{eq.final2}
|\O_n|\le (1+\pi(k-1)\rho(\O_n))\rho(\O_n).
\ee
Combining \eqref{eq.final1}, \eqref{eq.final2} and the assumption $q<1/2$, we conclude that the sequence of inradius $(\rho(\O_n))$ must be bounded from below by some positive constant. By Lemma \ref{lem.inradcon}, $\O^{\star}$ is nonempty.
\end{proof}

%%%%%%%%%%%%%%%%%%%%%%%%%%%%%%%%%%%%%%%%%%%%%%%%%%
%%%%%%%%%%%%%%%%%%%%%%%%%%%%%%%%%%%%%%%%%%%%%%%%%%
\section{Conclusions}\label{sconc}

We have seen that in the planar case the topological constraint present in classes $\A_k$ is strong enough to ensure the existence of at least a relaxed optimizer. In higher dimensions this is no longer true and easy examples show that it is possible to construct sequences $(\O_n)$ in $\A_k$ with $P(\O_n)$ bounded and $T(\O_n)\to0$. This suggests that in higher dimensions stronger constraints need to be imposed in order to have well posed optimization problems.

Another interesting issue is the analysis of the same kind of questions when the exponent $2$ is replaced by a general $p>1$ in \eqref{vartor}; the torsional rigidity $T(\O)$ then becomes the $p$-torsion $T_p(\O)$ and it would be interesting to see how our results depend on the exponent $p$ and if in this case the analysis in dimensions higher than two is possible.

Finally, shape functionals $F(\O)$ involving quantities other than perimeter and torsional rigidity are interesting to be studied: we point out some recent results in \cite{bbp20},\cite{FtLa} and references therein. However, to our knowledge, the study of these shape functionals under topological constraints as the ones of classes $\A_k$ is still missing.

\bigskip

\noindent{\bf Acknowledgments.} The work of GB is part of the project 2017TEXA3H {\it``Gradient flows, Optimal Transport and Metric Measure Structures''} funded by the Italian Ministry of Research and University. The authors are member of the Gruppo Nazionale per l'Analisi Matematica, la Probabilit\`a e le loro Applicazioni (GNAMPA) of the Istituto Nazionale di Alta Matematica (INdAM).

\bigskip

\bigskip
{\small\noindent
Luca Briani:
Dipartimento di Matematica,
Universit\`a di Pisa\\
Largo B. Pontecorvo 5,
56127 Pisa - ITALY\\
{\tt luca.briani@phd.unipi.it}

\bigskip\noindent
Giuseppe Buttazzo:
Dipartimento di Matematica,
Universit\`a di Pisa\\
Largo B. Pontecorvo 5,
56127 Pisa - ITALY\\
{\tt giuseppe.buttazzo@dm.unipi.it}\\
{\tt http://www.dm.unipi.it/pages/buttazzo/}

\bigskip\noindent
Francesca Prinari:
Dipartimento di Matematica e Informatica,
Universit\`a di Ferrara\\
Via Machiavelli 30,
44121 Ferrara - ITALY\\
{\tt francescaagnese.prinari@unife.it}\\
{\tt http://docente.unife.it/francescaagnese.prinari/}}

\end{document}